\documentclass{jcmlatex}
\usepackage{subfigure}
\usepackage{graphicx}
\usepackage{float}
\usepackage{epstopdf}
\usepackage{cases}
\usepackage{empheq}
\makeatletter 
\@addtoreset{equation}{section}
\makeatother  

\def\JCMvol{xx}
\def\JCMno{x}
\def\JCMyear{20xx}
\def\JCMreceived{xxx}
\def\JCMrevised{xxx}
\def\JCMaccepted{xxx}

\begin{document}

\markboth{N.~LI AND Y.C.~ZHAO}{Convergence Analysis for the Cahn-Hilliard Equation}

\title{Convergence Analysis of a Linear, Unconditionally Energy-Stable SAV Finite Element Method for the Cahn-Hilliard Equation}

\author{Na LI
\thanks{School of Data Science and Computing, Shandong Women's University, Jinan, China \\ Email: dajiena2002@163.com}
\and
Yongchao ZHAO
\thanks{Jinan Quanxin Eco-Environmental Technology Co., Ltd., Jinan, China\\ Email: zhao538@163.com}}

\maketitle
\makeatletter
\let\ps@firstpage@orig\ps@firstpage

\renewcommand\ps@firstpage{%
  \ps@firstpage@orig%

  \def\@oddfoot{%
    \raisebox{-10pt}[0pt][0pt]{
      \parbox{\textwidth}{%
        \raggedright\footnotesize%
        \quad This work was supported by the Natural Science Foundation of Shandong Province, China (Grant No. ZR2023QA018).%
      }%
    }%
  }%
  \let\@evenfoot\@oddfoot%
}
\makeatother

\begin{abstract}
This paper proposes a finite element scheme, based on the Scalar Auxiliary Variable (SAV) approach, for the Cahn-Hilliard equation -- a model that possesses significant physical relevance and a rich mathematical structure. A convergence analysis of the fully discrete scheme is conducted under suitable regularity assumptions, confirming optimal-order convergence in both time and space for the phase variable, chemical potential, and auxiliary variable in the $H^1$-norm. Furthermore, the scheme is proven to be unconditionally energy stable. Finally, a numerical example is presented to demonstrate the effectiveness of the method and to confirm the theoretical convergence rates.
\end{abstract}

\begin{classification}
65M06, 65Z05.
\end{classification}

\begin{keywords}
Cahn-Hilliard equation, Scalar Auxiliary Variable(SAV), finite element method, error estimates.
\end{keywords}

\section{Introduction}
\label{sec:into}
The phase-field method is one of the primary approaches for simulating multiphase flow problems, such as phase separation in alloys, polymer blending, and biofilm dynamics (e.g., \cite{Linzonghu}). The core idea of this approach is to replace the theoretically sharp interface of zero thickness with a diffuse interface of finite thickness. Within this transition layer, physical quantities vary continuously, thereby circumventing the complex topological tracking required by traditional sharp-interface methods. The Cahn-Hilliard (CH) equation, as a classical phase-field model describing phase separation and coarsening dynamics, possesses profound physical background and rich mathematical structure. Its numerical solution has long been an important topic in computational mathematics and engineering simulation.

The CH equation is a fourth-order nonlinear diffusion equation, and its numerical solution faces two major challenges: first, the nonlinear double-well potential term introduces numerical stiffness, imposing strict restrictions on the time step size; second, the high-order derivative terms require sufficient regularity in spatial discretization. Traditional fully implicit schemes, while stable, require solving nonlinear systems at each step, leading to high computational costs. On the other hand, explicit or semi-explicit treatments simplify computation but often sacrifice energy stability or impose severe time-step constraints. Consequently, the central goal of numerical method development for the CH equation is to construct schemes that are both discretely energy stable (ensuring the physical reliability of long-time simulations) and computationally efficient (allowing for the use of relatively large time steps).

To preserve the energy dissipation structure of the original system at the discrete level, various energy-stable numerical schemes have been proposed, including the convex splitting approach (e.g., \cite{C.M.Elliott,D.J.Eyre}), the stabilized explicit method (e.g., \cite{J.Zhu, J.Shen4, P.Yue, D.Li}), and the Invariant Energy Quadratization (IEQ) method (e.g., \cite{X.Yang,X.Yang1}). Among these, the Scalar Auxiliary Variable (SAV) method (e.g., \cite{J.Shen2,J.Shen5}), introduced by Shen et al. in 2018, has garnered significant attention. This approach rewrites the nonlinear part of the free energy as a quadratic form by introducing an auxiliary variable. This reformulation leads to a numerical scheme that is linear, unconditionally energy stable, and readily amenable to high-order temporal discretizations. The SAV method inherits the advantages of the IEQ method but, crucially, only requires solving constant-coefficient linear systems at each time step, which significantly boosts computational efficiency. Furthermore, it is independent of the specific form of the nonlinear potential, making it applicable to a broad class of gradient flow systems.

Since its inception, the SAV method has been the subject of extensive research, leading to numerous improvements and generalizations. For instance, the Multiple SAV (MSAV) method enhances accuracy and stability by using multiple auxiliary variables (e.g., \cite{Q.Cheng3}); the relaxed SAV (RSAV) method controls numerical errors in the auxiliary variables via a relaxation technique (e.g., \cite{M.Jiang}); the Runge-Kutta based RK-SAV method achieves arbitrary high-order accuracy (e.g., \cite{G.Akrivis}); the generalized Positive Auxiliary Variable (gPAV) method handles more general dissipative systems (e.g., \cite{Y.Zhiguo}); and the Lagrange multiplier-based SAV method preserves the original energy functional exactly at the discrete level (e.g., \cite{Q.Cheng4}). These variants have successfully extended the application of SAV to a wide range of phase-field models, including those coupled with fluid flow, crystal growth, and vesicle membranes.

However, within the finite element framework, a complete and unified a priori error analysis theory for the Scalar Auxiliary Variable (SAV) method applied to the classical Cahn-Hilliard equation remains to be established. Although convergence proofs have been provided in other discretization frameworks such as spectral methods (e.g., \cite{Zhaoyang,Linping}), the systematic development of an optimal-order convergence theory in both time and space for its fully discrete backward Euler scheme in the finite element setting remains essential to strengthening the numerical analysis foundation of this method.

In this paper, we propose a fully discrete finite element scheme based on the SAV approach for the classical Cahn-Hilliard equation, employing backward Euler temporal discretization and linear finite element spatial discretization. A systematic convergence analysis of the proposed scheme is conducted: we first prove its unconditional energy stability and unique solvability. Then, by introducing an appropriate elliptic projection operator, we derive rigorous error estimates for the phase-field variable, the chemical potential, and the auxiliary scalar variable, thereby establishing optimal first-order convergence in time and optimal-order convergence in space. Finally, a numerical experiment is presented to verify the effectiveness of the scheme and to confirm that the observed convergence rates align with the theoretical predictions.

The remainder of this paper is organized as follows. In Section \ref{sec:formulation}, we present the SAV reformulation of the CH equation, its weak formulation, and the finite element space settings. In Section \ref{sec:fully discrete}, we introduce the fully discrete SAV scheme, prove its energy stability and unique solvability, and perform a detailed error analysis using a projection operator to establish our main convergence theorem. Section \ref{sec:numerical} provides numerical examples to verify the stability and convergence order of the scheme.

\section{Mathematical Formulation and Preliminaries}
\label{sec:formulation}

In this paper, we consider the classical Cahn-Hilliard equation with the boundary and initial conditions:
\begin{equation}\label{model}
  \left\{
   \begin{aligned}
 & \phi_{t}=\Delta \mu, && \mathrm{in}\quad \Omega \times J, \\
 & \mu=-\Delta \phi+F'(\phi), && \mathrm{in}\quad\Omega \times J,\\
 & \partial_{\mathbf{n}}\phi\mid_{\Gamma}=0, \partial_{\mathbf{n}}\mu\mid_{\Gamma}=0, && \mathrm{on}\quad\Gamma \times J,\\
 & \phi\mid_{t=0}=\phi_{0}, && \mathrm{in}\quad\Omega,
   \end{aligned}
   \right.
  \end{equation}
where $\Omega$ is a bounded domain with smooth boundary $\Gamma$ in $\mathbb{R}^{d} (d=2,3)
$, $J=(0,T]$ denotes the time interval with $T>0$ being the total simulation time. The subscript $t$ denotes the partial derivative with respect to time and $\Delta$  represents the usual Laplace operator acting on the space variables in $\Omega$, $\mathbf{n}=\mathbf{n}(x)$ is the unit outer normal vector on $\Gamma$, $\partial_{\mathbf{n}}$ denotes the outward normal derivative. The phase variable $\phi$ stands for the difference of the local relative concentration of the two components in the binary mixture such that $\phi=\pm 1$ correspond to the pure phases components of the mixture, while $\phi \in (-1,1)$ corresponds to the transition between them. $\mu$ represents the chemical potential that equals to the $Fr\acute{e}chet$ derivative of certain free energy given by
$$E_{\mathrm{total}}(\phi)=\int_{\Omega}\left(\frac{1}{2}|\nabla\phi|^{2}+F(\phi)\right)\mathrm{d}x.$$
$F$ denotes the potential, which typically has a double-well structure with two minimums and a local unstable maximum in between. The total free energy decreases over time
\begin{equation}
\frac{\mathrm{d}}{\mathrm{d}t}E_{\mathrm{total}}(\phi)+\int_{\Omega} |\nabla \mu|^{2}\mathrm{d}x = 0, \quad \forall t\in [0,T].\label{energysh}
\end{equation}
Assume that the potential energy $E(\phi):=\int_{\Omega} F(\phi)  \mathrm{d}x$ is bounded from below, i.e., $E(\phi) > -C_0$ for some constant $C_0 > 0$. In the SAV approach, we introduce a scalar auxiliary variable
$$ r(t) = \sqrt{E(\phi) + C_0}. $$

$\mathbf{Notation\ Convention:}$ For the remainder of this paper, we adopt the convention that $E(\phi)$ denotes $E(\phi)+C_0$ (not the original energy functional). This redefinition ensures $E(\phi) > 0$ for all $\phi$, which guarantees the well-definedness of terms involving $\sqrt{E(\phi)}$. This convention does not alter the gradient flow structure of the system since $C_0$ is a constant.

The original system can then be reformulated as:
\begin{empheq}[left=\empheqlbrace]{alignat=2}
    &\phi_{t}   = \Delta \mu,                                           &&\quad \text{in } \Omega \times J, \label{1} \\
    &\mu        = -\Delta \phi + \frac{r}{\sqrt{E(\phi)}} F'(\phi),     &&\quad \text{in } \Omega \times J, \label{2} \\
    &r_{t}      = \frac{1}{2\sqrt{E(\phi)}} \int_{\Omega} F'(\phi) \phi_{t} \, \mathrm{d}x, &&\quad \text{in } \Omega \times J, \label{3} \\
    &\partial_{\mathbf{n}}\phi \mid_{\Gamma} = 0,\quad \partial_{\mathbf{n}}\mu \mid_{\Gamma} = 0, &&\quad \text{on } \Gamma \times J, \label{4} \\
    &\phi|_{t=0} = \phi_{0}(x),                                         &&\quad \text{in } \Omega. \label{5}
\end{empheq}

We give some preliminaries,
Let
$$(u,v) = (u,v)_{L^2(\Omega)} = \int_{\Omega} uv \, dx$$
denote the $L^2(\Omega)$ inner product. Let $W^{k,p}(\Omega)$ be the standard Sobolev space
$$W^{k,p}(\Omega) = \{v : \|v\|_{W^{k,p}(\Omega)} < \infty\},$$
where
$$\|v\|_{W^{k,p}(\Omega)} = \left(\sum_{|\alpha|\leq k} \|D^{\alpha}v\|_{L^p(\Omega)}^p\right)^{1/p}.$$

For notational convenience, we define the following shorthand notations:
$$H^{k}(\Omega):= W^{k,2}(\Omega),\ L^{2}(\Omega):= H^{0}(\Omega),$$
$$\|v\|_{k,p}:= \|v\|_{W^{k,p}(\Omega)},\ \|v\|_k:= \|v\|_{H^k(\Omega)} = \|v\|_{k,2},\ \|v\|:= \|v\|_{L^2(\Omega)} = \|v\|_0.$$

To analyze the nonlinear part of the equation in the subsequent discussion, we introduce the negative norm:
$$\|v\|_{-1}=\mathrm{sup}\left\{\frac{(v,\zeta)}{\|\zeta\|_{1}}, \ \forall\zeta \in H^{1}(\Omega)\right\}.$$

Multiplying \eqref{1} with a test function $u$, then using integration by parts,
 $$(\phi_{t},u)=(\Delta \mu,u)=\int_{\Omega}\Delta \mu \cdot u \mathrm{d}x=\int_{\Gamma}\partial_{\mathbf{n}}\mu\cdot u \mathrm{d}s-\int_{\Omega}\nabla \mu\cdot\nabla u \mathrm{d}x=-(\nabla \mu,\nabla u ).$$
Similarly, multiplying \eqref{2} with a test function $v$,
\begin{align*}
 & (\mu,v)=(\nabla \phi,\nabla v)+ \left(\frac{r}{\sqrt{E(\phi)}}F'(\phi),v\right).
  \end{align*}

Let $V = H^1(\Omega)$ denote the standard Sobolev space. The weak formulation of the system \eqref{1}-\eqref{5} is: find $\{ \phi, \mu, r \} \in V \times V \times \mathbb{R}$ (with the additional regularity $\phi, \mu \in H^{2}(\Omega) \cap L^{\infty}(\Omega)$ for the purpose of error analysis) such that
\begin{empheq}[left=\empheqlbrace]{alignat=2}
    &(\phi_{t}, u) = -(\nabla \mu, \nabla u), &&\qquad \forall u \in V, \label{r1} \\
    &(\mu, v)      = (\nabla \phi, \nabla v) + \left(\frac{r}{\sqrt{E(\phi)}} F'(\phi), v\right), &&\qquad \forall v \in V, \label{r2} \\
    &r_{t}         = \left(\frac{1}{2\sqrt{E(\phi)}} F'(\phi), \phi_{t}\right). &&\qquad \label{r3}
\end{empheq}

To illustrate an appropriate finite element approximation for $\{\phi, \mu, r\}$, we divide the space $\Omega$ into triangular elements $\tau_{h}$ quasi-uniformly, where the diameter of $\tau_{h}$ is not greater than $h(0<h<1)$, and each boundary triangular element is allowed to have a curved edge. We construct a conforming finite element space $V_h \subset V$ as follows:
$$V_h = \{ v_h \in C(\overline{\Omega}) : v_h|_e \in P_1(e), \ \forall e \in \tau_h \},$$
where $P_{q}(e)$ represents the space of polynomials of degree less than or equal to $q$ on cell $e$.

Let $\pi_h: V \rightarrow V_h$ denote the standard $L^2$-orthogonal projection operator, defined by
$$(v - \pi_h v, u_h) = 0, \quad \forall u_h \in V_h. $$
Under the regularity assumption $v \in H^2(\Omega)$, the following approximation properties hold:
\begin{align}
\| \pi_h v \|_1 &\leq C \| v \|_1, \label{ty1} \\
\| v - \pi_h v \| + h \| v - \pi_h v \|_1 &\leq C h^2 \| v \|_2. \label{ty2}
\end{align}
Here the letter $C$ denotes a positive constant independent of the discretization parameters $h$ and $\Delta t$, and may have different values at different places.

\section{The Fully Discrete Scheme and Error Estimates}
\label{sec:fully discrete}
In this section, we will present a fully discrete scheme for the system, show the discrete energy stability, and analyze error estimates for the phase variable, the chemical potential, and the auxiliary scalar variable.

\subsection{ Existence and Uniqueness of Solution and Unconditional Energy Stability}
Based on the backward Euler temporal discretization and the Scalar Auxiliary Variable (SAV) approach, the fully discrete scheme of the system is as follows:
\begin{empheq}[left=\empheqlbrace]{alignat=2}
    &(\partial_{t}\phi_{h}^{n+1}, u_{h}) = -(\nabla \mu_{h}^{n+1}, \nabla u_{h}), &&\qquad \forall u_{h} \in V_h, \label{f1} \\
    &(\mu_{h}^{n+1}, v_{h}) = (\nabla \phi_{h}^{n+1}, \nabla v_{h}) + \left(r_{h}^{n+1}\frac{F'(\phi_{h}^{n})}{\sqrt{E(\phi_{h}^{n})}}, v_{h}\right), &&\qquad \forall v_{h} \in V_h, \label{f2} \\
    &\partial_{t}r_{h}^{n+1} = \left(\frac{F'(\phi_{h}^{n})}{2\sqrt{E(\phi_{h}^{n})}}, \partial_{t}\phi_{h}^{n+1}\right), && \label{f3}
\end{empheq}
 where $(\phi_{h}^{n}, \mu_{h}^{n},r_{h}^{n})\in V_h\times V_h\times \mathbb{R}$ is an approximation of $\phi(t^{n})=\phi(n \Delta t)$, $\mu(t^{n})=\mu(n \Delta t)$ and $r(t^{n})=r(n \Delta t)$, $\partial_{t}\phi_{h}^{n+1}=\frac{\phi_{h}^{n+1}-\phi_{h}^{n}}{\Delta t}$, $\partial_{t}r_{h}^{n+1}=\frac{r_{h}^{n+1}-r_{h}^{n}}{\Delta t}$ ($\Delta t>0$ stands for the time step).

 \begin{theorem}\label{thm:energy_stability}
The system~\eqref{f1}--\eqref{f3} is unconditionally uniquely solvable and is unconditionally stable with respect to the discrete total free energy
\begin{equation}
E_{\mathrm{total}}^{n}=\frac{1}{2}\|\nabla\phi_{h}^{n}\|^{2}+(r_{h}^{n})^{2}. \label{eq:total_energy}
\end{equation}
Specifically, the following energy inequality holds:
\begin{align}
E_{\mathrm{total}}^{n+1}-E_{\mathrm{total}}^{n}
+\frac{1}{2}\|\nabla(\phi_{h}^{n+1}-\phi_{h}^{n})\|^{2}
+(r_{h}^{n+1}-r_{h}^{n})^{2}
=-\|\nabla\mu_{h}^{n+1}\|^{2}\Delta t, \label{rff1}
\end{align}
which demonstrates the monotonic decay of the discrete energy.
\end{theorem}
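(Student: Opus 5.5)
Two things have to be shown: that the linear system at each step has a unique solution, and the energy identity \eqref{rff1}. The identity is the standard SAV test-function computation, and we do it first.

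\textbf{Energy identity.} We take $u_h=\mu_h^{n+1}$ in \eqref{f1} and $v_h=\partial_t\phi_h^{n+1}$ in \eqref{f2}. Subtracting the two relations eliminates $(\mu_h^{n+1},\partial_t\phi_h^{n+1})$ and gives
\begin{equation*}
-\|\nabla\mu_h^{n+1}\|^2=(\nabla\phi_h^{n+1},\nabla\partial_t\phi_h^{n+1})+r_h^{n+1}\Big(\frac{F'(\phi_h^n)}{\sqrt{E(\phi_h^n)}},\partial_t\phi_h^{n+1}\Big).
\end{equation*}
By \eqref{f3}, the last term equals $2r_h^{n+1}\partial_t r_h^{n+1}$. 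We then apply the identity $2a(a-b)=a^2-b^2+(a-b)^2$ twice: with $a=\nabla\phi_h^{n+1}$, $b=\nabla\phi_h^n$, and with $a=r_h^{n+1}$, $b=r_h^n$. Multiplying by $\Delta t$ yields exactly \eqref{rff1}. Dropping the nonnegative terms on the left gives $E^{n+1}_{\mathrm{total}}\le E^n_{\mathrm{total}}$, with no restriction on $\Delta t$.

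\textbf{Solvability: eliminating $r$.} The scheme is linear and square in $(\phi_h^{n+1},\mu_h^{n+1},r_h^{n+1})$, so it suffices to show that the homogeneous problem has only the trivial solution. Set $b^n=F'(\phi_h^n)/\sqrt{E(\phi_h^n)}$; it is well defined because $E>0$ by the notation convention. From \eqref{f3},
\begin{equation*}
r_h^{n+1}=r_h^n+\tfrac12\big(b^n,\phi_h^{n+1}-\phi_h^n\big),
\end{equation*}
which we substitute into \eqref{f2}. The homogeneous system (zero old data) then reads: $(\phi,u_h)=-\Delta t(\nabla\mu,\nabla u_h)$ and $(\mu,v_h)=(\nabla\phi,\nabla v_h)+\tfrac12(b^n,\phi)(b^n,v_h)$ for all $u_h,v_h\in V_h$.

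\textbf{Solvability: uniqueness.} In the homogeneous system we take $u_h=\mu$ and $v_h=\phi$ and subtract. This gives
\begin{equation*}
\Delta t\|\nabla\mu\|^2+\|\nabla\phi\|^2+\tfrac12(b^n,\phi)^2=0,
\end{equation*}
so $\nabla\mu=\nabla\phi=0$ and $(b^n,\phi)=0$. Hence $\mu$ and $\phi$ are constants. Testing the first equation with $u_h=1$ gives $(\phi,1)=0$, so $\phi=0$. The second equation with $v_h=1$ then gives $(\mu,1)=0$, so $\mu=0$, and consequently $r=0$. Injectivity of this finite-dimensional square linear system implies existence and uniqueness at every step, for all $\Delta t,h$.

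The only step needing care is this kernel argument. The rank-one term $\tfrac12(b^n,\cdot)(b^n,\cdot)$ is nonnegative, and we must handle the constants through mass conservation, i.e.\ the test function $u_h=1$. No serious obstacle is expected there.
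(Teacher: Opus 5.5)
Your proof is correct and is essentially the paper's. The energy identity comes from the same test functions $u_h=\mu_h^{n+1}$ and $v_h=\partial_t\phi_h^{n+1}$, and your kernel argument is the paper's with $r_h^{n+1}$ eliminated in advance: your $\frac{1}{2}(b^n,\phi)^2$ is exactly the paper's $2(r_h^{n+1})^2$, obtained there by multiplying the homogeneous version of \eqref{f3} by $2r_h^{n+1}$. The only cosmetic difference is that you remove the constants by testing with $1$, whereas the paper tests the homogeneous equations with $u_h=\phi_h^{n+1}$ and $v_h=\mu_h^{n+1}$ to get $\|\phi_h^{n+1}\|=\|\mu_h^{n+1}\|=0$ directly.
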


 Proof. First, we establish the unconditional unique solvability of the numerical scheme.

  Since \eqref{f1}-\eqref{f3} can be rewritten as the following linear system \eqref{weiyi} for unknowns $\phi_{h}^{n+1}$, $\mu_{h}^{n+1}$ and $r_{ h}^{n+1}$,
\begin{equation}\label{weiyi}
  \left\{
   \begin{aligned}
&(\phi_{h}^{n+1},u_{h})+\Delta t (\nabla \mu_{h}^{n+1},\nabla u_{h})=(\phi_{h}^{n}, u_{h}),\\
&(\mu_{h}^{n+1},v_{h})-(\nabla \phi_{h}^{n+1},\nabla v_{h})-\left(r_{h}^{n+1}\frac{F'(\phi_{h}^{n})}{\sqrt{E(\phi_{h}^{n})}},v_{h}\right)=0,\\
&r_{h}^{n+1}-\left(\frac{F'(\phi_{h}^{n})}{2\sqrt{E(\phi_{h}^{n})}}, \phi_{h}^{n+1}\right)= r_{h}^{n}-\left(\frac{F'(\phi_{h}^{n})}{2\sqrt{E(\phi_{h}^{n})}}, \phi_{h}^{n}\right),
   \end{aligned}
   \right.
  \end{equation}
in order to explain the existence and uniqueness of the solution of equation \eqref{f1}-\eqref{f3}, it suffices to show the corresponding homogeneous system of linear equations
\begin{numcases}{ }
(\phi_{h}^{n+1},u_{h})+\Delta t (\nabla \mu_{h}^{n+1},\nabla u_{h})=0,\label{qc1}\\
(\mu_{h}^{n+1},v_{h})-(\nabla \phi_{h}^{n+1},\nabla v_{h})-\left(r_{h}^{n+1}\frac{F'(\phi_{h}^{n})}{\sqrt{E(\phi_{h}^{n})}},v_{h}\right)=0,\label{qc2}\\
r_{h}^{n+1}-\left(\frac{F'(\phi_{h}^{n})}{2\sqrt{E(\phi_{h}^{n})}}, \phi_{h}^{n+1}\right)=0.\label{qc3}
\end{numcases}
has only the trivial solution.

Take $u_{h}=\phi_{h}^{n+1}$ and $u_{h}=\mu_{h}^{n+1}$ in \eqref{qc1}, respectively,
\begin{align}
\|\phi_{h}^{n+1}\|^{2}+\Delta t (\nabla \mu_{h}^{n+1},\nabla\phi_{h}^{n+1})=0,\label{phi21}\\
(\phi_{h}^{n+1},\mu_{h}^{n+1})+\Delta t \|\nabla \mu_{h}^{n+1}\|^{2}=0.\label{gphi1}
\end{align}

Take $v_{h}=\phi_{h}^{n+1}$ and $v_{h}=\mu_{h}^{n+1}$ in \eqref{qc2}, respectively,
\begin{align}
(\mu_{h}^{n+1},\phi_{h}^{n+1})-\|\nabla \phi_{h}^{n+1}\|^{2}-\left(r_{h}^{n+1}\frac{F'(\phi_{h}^{n})}{\sqrt{E(\phi_{h}^{n})}},\phi_{h}^{n+1}\right)=0,\label{gmu1}\\
\|\mu_{h}^{n+1}\|^{2}-(\nabla \phi_{h}^{n+1},\nabla \mu_{h}^{n+1})-\left(r_{h}^{n+1}\frac{F'(\phi_{h}^{n})}{\sqrt{E(\phi_{h}^{n})}},\mu_{h}^{n+1}\right)=0.\label{gmu2}
\end{align}

Multiply equation (\eqref{qc3} by $2 r_{h}^{n+1}$,
\begin{align}
2 (r_{h}^{n+1})^2-\left(r_{h}^{n+1} \frac{F'(\phi_{h}^{n})}{\sqrt{E(\phi_{h}^{n})}}, \phi_{h}^{n+1}\right)=0.\label{rr}
\end{align}

From \eqref{gphi1}, \eqref{gmu1}, and \eqref{rr}, we obtain
\begin{align}
\|\nabla \phi_{h}^{n+1}\|^{2}+\Delta t \|\nabla \mu_{h}^{n+1}\|^{2}+2 (r_{h}^{n+1})^2=0,\label{pm1}
\end{align}
so that
\begin{align}
\nabla \phi_{h}^{n+1}=0,  \nabla \mu_{h}^{n+1}=0, r_{h}^{n+1}=0. \label{pm2}
\end{align}

Considering \eqref{phi21}, \eqref{gmu2} and \eqref{pm2}, we get
$\|\phi_{h}^{n+1}\|=0, \|\mu_{h}^{n+1}\|=0,$ which implies $\phi_{h}^{n+1}=0, \mu_{h}^{n+1}=0,$
Therefore, equations \eqref{qc1}-\eqref{qc3} have only the trivial solution, and consequently, the solution of \eqref{f1}-\eqref{f3} exists and is unique.

Next, we analyze the energy stability properties of the scheme.

Take $u_{h}=\mu_{h}^{n+1}$ in \eqref{f1}, $v_{h}=\partial_{t}\phi_{h}^{n+1}$ in \eqref{f2}:
\begin{align}
(\partial_{t}\phi_{h}^{n+1},\mu_{h}^{n+1}) &= -(\nabla \mu_{h}^{n+1},\nabla \mu_{h}^{n+1}) = -\|\nabla\mu_{h}^{n+1}\|^{2}, \label{u2} \\
(\mu_{h}^{n+1},\partial_{t}\phi_{h}^{n+1}) &= (\nabla \phi_{h}^{n+1},\partial_{t}\nabla\phi_{h}^{n+1})
    + \left(r_{h}^{n+1}\frac{F'(\phi_{h}^{n})}{\sqrt{E(\phi_{h}^{n})}},\partial_{t}\phi_{h}^{n+1}\right) \nonumber \\
    &= \frac{1}{2\Delta t}\bigl(\|\nabla \phi_{h}^{n+1}\|^{2} - \|\nabla \phi_{h}^{n}\|^{2}
        + \|\nabla (\phi_{h}^{n+1} - \phi_{h}^{n})\|^{2}\bigr) \nonumber \\
    &\quad + \left(r_{h}^{n+1}\frac{F'(\phi_{h}^{n})}{\sqrt{E(\phi_{h}^{n})}},\partial_{t}\phi_{h}^{n+1}\right). \label{v2}
\end{align}

Let \eqref{f3} be multiplied by $2r_{h}^{n+1}$,
\begin{align*}
\partial_{t}r_{h}^{n+1} \cdot 2r_{h}^{n+1}=\left(r_{h}^{n+1}\frac{F'(\phi_{h}^{n})}{\sqrt{E(\phi_{h}^{n})}},\partial_{t}\phi_{h}^{n+1}\right),
\end{align*}
with that
\begin{align}
 \frac{1}{\Delta t}\left[(r_{h}^{n+1})^{2}-(r_{h}^{n})^2+(r_{h}^{n+1}-r_{h}^{n})^2\right]=\left(r_{h}^{n+1}\frac{F'(\phi_{h}^{n})}{\sqrt{E(\phi_{h}^{n})}},\partial_{t}\phi_{h}^{n+1}\right),\label{rf2}
\end{align}

Substituting \eqref{u2},\eqref{rf2} into \eqref{v2} and simplifying, we obtain \eqref{rff1}. Thus, Theorem~\ref{thm:energy_stability} is proved.$\qquad\Box$
\subsection{Introduction of a Projection}
In order to analyze the convergence order of the fully discrete scheme, the following projection is defined in this section: find $(\overline{\phi}, \overline{\mu}) \in V_h \times V_h$ such that
\begin{empheq}[left=\empheqlbrace]{alignat=2}
    &(\phi - \overline{\phi}, u_h) = 0, &&\qquad \forall u_h \in V_h, \label{t1} \\
    &(\mu - \overline{\mu}, v_h) = (\nabla(\phi - \overline{\phi}), \nabla v_h)
        + \Bigl(r \frac{F'(\phi)}{\sqrt{E(\phi)}} - r \frac{F'(\overline{\phi})}{\sqrt{E(\overline{\phi})}}, v_h\Bigr),
        &&\qquad \forall v_h \in V_h, \label{t2} \\
   &(\nabla(\mu - \overline{\mu}), \nabla w_h) = 0, &&\qquad \forall w_h \in V_h. \label{t3}
\end{empheq}

\begin{theorem}\label{thm:projection_error}
Assume that the solution $(\phi, \mu, r)$ of the weak problem \eqref{r1}-\eqref{r3} possesses the regularity $\phi, \mu \in H^{2}(\Omega) \cap L^{\infty}(\Omega)$, $r \in L^{\infty}(J)$ and $F \in C^{2}(\mathbb{R})$ with $F''$ bounded on bounded sets. Then, the projection problem \eqref{t1}-\eqref{t3} admits a unique solution $(\overline{\phi}, \overline{\mu}) \in V_h \times V_h$. Moreover, there exists a constant $C > 0$, independent of the mesh size $h$, such that the following optimal-order error estimate holds:
\begin{align}
\|\phi - \overline{\phi}\|_{1}^{2} + \|\mu - \overline{\mu}\|_{1}^{2} \leq C (\|\phi\|_{2}^{2} + \|\mu\|_{2}^{2}) h^{2}. \label{tywc}
\end{align}
\end{theorem}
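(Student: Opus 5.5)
The plan is to decouple the projection system. Equation \eqref{t1} involves only $\overline{\phi}$ and says exactly that $\overline{\phi}=\pi_h\phi$. This settles existence and uniqueness of the first component. It also gives $\|\phi-\overline{\phi}\|_1\le Ch\|\phi\|_2$ directly from \eqref{ty2}.

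With $\overline{\phi}$ fixed, \eqref{t2} is a linear problem for $\overline{\mu}$ alone. Its left-hand side is the mass-matrix form $(\overline{\mu},v_h)$, and the right-hand side
\[
(\mu,v_h)-(\nabla(\phi-\overline{\phi}),\nabla v_h)-\Bigl(r\frac{F'(\phi)}{\sqrt{E(\phi)}}-r\frac{F'(\overline{\phi})}{\sqrt{E(\overline{\phi})}},v_h\Bigr)
\]
is a bounded linear functional on $V_h$. By the Riesz representation theorem in $(V_h,(\cdot,\cdot))$, $\overline{\mu}$ is therefore uniquely determined, so uniqueness of the pair $(\overline{\phi},\overline{\mu})$ follows.

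Equation \eqref{t3} must then be verified for this $\overline{\mu}$. Two facts fix $\overline{\mu}$ independently:
\begin{itemize}
\item \eqref{t3} alone determines $\overline{\mu}$ up to an additive constant, namely as the Neumann Ritz projection of $\mu$;
\item testing \eqref{t2} with $v_h=1$ fixes that constant through the mean condition $\int_\Omega(\mu-\overline{\mu})\,dx=\bigl(r\frac{F'(\phi)}{\sqrt{E(\phi)}}-r\frac{F'(\overline{\phi})}{\sqrt{E(\overline{\phi})}},1\bigr)$.
\end{itemize}
Hence the $\overline{\mu}$ from \eqref{t2} and the one from \eqref{t3} with this mean must coincide. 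I expect this compatibility step to be the main obstacle. My first attempt will be to show that the difference $\delta_h$ of the two candidates satisfies $(\delta_h,v_h)=0$ for all $v_h\in V_h$, using \eqref{t1} and the weak equation \eqref{r2} to rewrite the right-hand side of \eqref{t2}. If that identity cannot be closed for non-constant $v_h$, the estimates below still go through for the $\overline{\mu}$ defined by \eqref{t2} together with \eqref{t3}, since they use \eqref{t2} and \eqref{t3} only as stated.

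For the $H^1$-seminorm of $\mu-\overline{\mu}$, I use \eqref{t3} as Galerkin orthogonality:
\[
\|\nabla(\mu-\overline{\mu})\|^2=(\nabla(\mu-\overline{\mu}),\nabla(\mu-\pi_h\mu))\le \|\nabla(\mu-\overline{\mu})\|\,Ch\|\mu\|_2 ,
\]
which gives $\|\nabla(\mu-\overline{\mu})\|\le Ch\|\mu\|_2$.

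For the $L^2$ part, I take $v_h=\pi_h\mu-\overline{\mu}$ in \eqref{t2}. Since $(\mu-\pi_h\mu,v_h)=0$, the left side equals $\|\pi_h\mu-\overline{\mu}\|^2$. The gradient term on the right is bounded by
\[
Ch\|\phi\|_2\bigl(\|\nabla(\pi_h\mu-\mu)\|+\|\nabla(\mu-\overline{\mu})\|\bigr)\le Ch^2\|\phi\|_2\|\mu\|_2 .
\]
This is of order $h^2$, so no inverse estimate is needed.

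For the nonlinear term, I write $G(s)=F'(s)/\sqrt{E(s)}$ and argue as follows.
\begin{itemize}
\item The $L^\infty$-stability of $\pi_h$ on quasi-uniform meshes gives $\|\overline{\phi}\|_{L^\infty}\le C\|\phi\|_{L^\infty}$, so $\phi$ and $\overline{\phi}$ take values in a fixed bounded set.
\item On that set $F''$ is bounded, so $F'$ is Lipschitz.
\item $E$ is Lipschitz in $L^2$ on such bounded functions and is bounded below by a positive constant (using the convention $E\to E+C_0$).
\end{itemize}
Together with $r\in L^\infty(J)$, this yields $\|rG(\phi)-rG(\overline{\phi})\|\le C\|\phi-\overline{\phi}\|\le Ch^2\|\phi\|_2$. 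Collecting terms and absorbing with Young's inequality gives $\|\pi_h\mu-\overline{\mu}\|\le Ch(\|\phi\|_2+\|\mu\|_2)$.

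The triangle inequality with \eqref{ty2} then bounds $\|\mu-\overline{\mu}\|$. Adding the $H^1$ bounds for both components, squaring and summing yields \eqref{tywc}.
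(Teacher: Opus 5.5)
Your proposal has a genuine gap at existence. You flag it but do not close it, and it cannot be closed, because \eqref{t1}--\eqref{t3} is overdetermined. As you note, \eqref{t1}--\eqref{t2} already determine $(\overline{\phi},\overline{\mu})$ uniquely, and \eqref{t3} then imposes $\dim V_h-1$ further conditions. Subtracting the weak equation \eqref{r2} (with $v=v_h$) from \eqref{t2} gives
\[
(\overline{\mu},v_h)=(\nabla\overline{\phi},\nabla v_h)+\Bigl(r\frac{F'(\overline{\phi})}{\sqrt{E(\overline{\phi})}},v_h\Bigr)\qquad\forall v_h\in V_h.
\]
Meanwhile \eqref{t3}, combined with \eqref{r1} and \eqref{t1}, gives $(\partial_t\overline{\phi},w_h)=(\phi_t,w_h)=-(\nabla\overline{\mu},\nabla w_h)$ for all $w_h\in V_h$. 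So a solution exists only if $(\pi_h\phi,\overline{\mu})$ satisfies the semi-discrete equations exactly, which fails in general.

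For a concrete counterexample, take $F\equiv0$, so $r$ is constant. Let $\phi=e^{-\lambda^2t}\psi$, where $-\Delta\psi=\lambda\psi$, $\partial_{\mathbf{n}}\psi=0$, and $\lambda>0$ is the first nonzero Neumann eigenvalue. Let $-\Delta_h$ be the discrete Neumann Laplacian on $V_h$. The two identities force $(-\Delta_h)^2\pi_h\psi=\lambda^2\pi_h\psi$. Since $-\Delta_h$ is symmetric and nonnegative, this gives $-\Delta_h\pi_h\psi=\lambda\pi_h\psi$. That is impossible when $\pi_h\psi\neq0$: by min--max, the first nonzero eigenvalue of $-\Delta_h$ is strictly larger than $\lambda$, because a piecewise linear function cannot be an exact eigenfunction. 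Consequently the identity $(\delta_h,v_h)=0$ you plan to prove is false, and the sentence ``hence \dots\ must coincide'' is a non sequitur. Your fallback, estimates for ``the $\overline{\mu}$ defined by \eqref{t2} together with \eqref{t3}'', concerns an object that need not exist.

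The paper's proof does not supply this step either. Its Lax--Milgram argument with $a(\psi_h,v_h)=(\psi_h,v_h)+(\nabla\psi_h,\nabla v_h)$ solves only the \emph{sum} of \eqref{t2} and \eqref{t3}, which does not imply the two equations separately.

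Conditionally on existence, your estimates are correct and follow the paper's route:
\begin{itemize}
\item orthogonality from \eqref{t3} for $\nabla(\mu-\overline{\mu})$;
\item testing \eqref{t2} with $\pi_h\mu-\overline{\mu}$;
\item a Lipschitz bound for $rF'/\sqrt{E}$.
\end{itemize}
Your use of the $L^\infty$-stability of $\pi_h$ to keep $\overline{\phi}$ in a bounded set makes explicit a step the paper skips. To obtain a true statement, the projection itself must be redefined as a square, well-posed system. The defect of whichever equation is dropped then has to be carried as a consistency term in the error equations.
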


\begin{proof} We first establish the existence and uniqueness of the projection $(\overline{\phi}, \overline{\mu})$. We observe that equation \eqref{t1} defines $\overline{\phi}$ as the standard $L^2$-orthogonal projection of $\phi$ onto the finite element space $V_h$, denoted by $\overline{\phi} = \pi_h \phi$. This projection is uniquely defined for any $\phi \in L^2(\Omega)$. Under the regularity assumption $\phi \in H^2(\Omega)$, the projection error satisfies the approximation property \eqref{ty2}, which yields
\begin{equation}
\|\overline{\phi}\| \leq C\|\phi\|, \quad \|\phi - \overline{\phi}\| \leq C\|\phi\|_2 h^2, \quad \|\phi - \overline{\phi}\|_1 \leq C\|\phi\|_2 h. \label{eq:rho_phi_est}
\end{equation}

Having established the properties of $\overline{\phi}$, we now turn to the existence and uniqueness of $\overline{\mu}$. With $\overline{\phi}$ fixed in $V_h$, we consider the system \eqref{t2}-\eqref{t3} for $\overline{\mu}$. To this end, we define a bilinear form $a: V_h \times V_h \to \mathbb{R}$ and a linear functional $F: V_h \to \mathbb{R}$ by
\begin{align*}
&a(\psi_h, v_h) = (\psi_h, v_h) + (\nabla \psi_h, \nabla v_h), \\
&F(v_h)= \left(\nabla(\phi - \overline{\phi}), \nabla v_h\right) + \left( r \frac{F'(\phi)}{\sqrt{E(\phi)}}, v_h \right) - \left( r \frac{F'(\overline{\phi})}{\sqrt{E(\overline{\phi})}}, v_h \right).
\end{align*}
The system \eqref{t2}-\eqref{t3} can then be reformulated as the variational problem: find $\overline{\mu} \in V_h$ such that
\begin{equation}
a(\overline{\mu}, v_h) = F(v_h), \quad \forall v_h \in V_h. \label{eq:mu_variational}
\end{equation}
Since $a(\cdot,\cdot)$ is continuous and coercive on $V_h$ with respect to the $H^1$-norm, and $F$ is bounded under the given regularity assumptions and the Lipschitz continuity of the nonlinear term, the Lax-Milgram theorem guarantees the existence of a unique solution $\overline{\mu} \in V_h$.

We now proceed to analyze the error $\mu - \overline{\mu}$.

Let $w_{h}=\pi_{h}\mu-\overline{\mu}$ in \eqref{t3},
\begin{align}
\|\nabla(\pi_{h}\mu-\overline{\mu})\|^{2}=-(\nabla(\mu-\pi_{h}\mu),\nabla(\pi_{h}\mu-\overline{\mu})),\label{mu31}
\end{align}
with that
\begin{align}
\|\nabla(\pi_{h}\mu-\overline{\mu})\|\leq\|\nabla(\mu-\pi_{h}\mu)\|\leq C \|\mu\|_{2} h. \label{mu32}
\end{align}

Take $v_{h}=\pi_{h}\mu-\overline{\mu}$ in \eqref{t2},
\begin{align}\label{mu21}
\|\pi_{h}\mu-\overline{\mu}\|^{2}=&-(\mu-\pi_{h}\mu,\pi_{h}\mu-\overline{\mu})+(\nabla(\phi-\overline{\phi}),\nabla(\pi_{h}\mu-\overline{\mu}))+\nonumber\\ &\left(r\frac{F'(\phi)}{\sqrt{E(\phi)}}-r\frac{F'(\overline{\phi})}{\sqrt{E(\overline{\phi})}},\pi_{h}\mu-\overline{\mu}\right)\nonumber\\
\leq & C \|\mu-\pi_{h}\mu\|^2+ \frac{1}{2}\|\pi_{h}\mu-\overline{\mu}\|^2+C(\|\nabla(\phi-\overline{\phi})\|^2+\|\nabla(\pi_{h}\mu-\overline{\mu})\|^2)\nonumber\\
&+C \left\|r\frac{F'(\phi)}{\sqrt{E(\phi)}}-r\frac{F'(\overline{\phi})}{\sqrt{E(\overline{\phi})}}\right\|^2.
\end{align}

Observing that
\begin{align}
\left\|r\frac{F'(\phi)}{\sqrt{E(\phi)}}-r\frac{F'(\overline{\phi})}{\sqrt{E(\overline{\phi})}}\right\|=&\left\|r\frac{F'(\phi)-F'(\overline{\phi})}{\sqrt{E(\phi)}}+r\left( \frac{ F'(\overline{\phi})}{\sqrt{E(\phi)}}-\frac{F'(\overline{\phi})}{\sqrt{E(\overline{\phi})}}\right)\right\|\nonumber\\
\leq&\left\|r \frac{ F'(\phi)-F'(\overline{\phi})}{\sqrt{E(\phi)}}\right\|+\left\|\frac{r F'(\overline{\phi})\left(E(\overline{\phi})-E(\phi)\right)}{\sqrt{E(\phi)E(\overline{\phi})}(\sqrt{E(\phi)}+\sqrt{E(\overline{\phi})})}\right\|\nonumber\\
\leq&C \|r\|_{L^{\infty}(J)}\|\phi-\overline{\phi}\|\leq C \|\phi\|_{2} h^{2},\label{F}
\end{align}
By combining the estimates from \eqref{eq:rho_phi_est}, \eqref{mu32} and \eqref{F}, we can obtain
\begin{align}\label{muty1}
\|\pi_{h}\mu-\overline{\mu}\|^{2} \leq & C (\|\phi\|_{2}^{2} h^{2}+\|\mu\|_{2}^2 h^{2}+ \|\phi\|_{2}^2+\|\mu\|_{2}^2)h^{2}\leq C(\|\phi\|_{2}^{2}+\|\mu\|_{2}^2) h^{2}.
\end{align}

Putting together the estimates for $\phi-\overline{\phi}$ and $\mu-\pi_{h}\mu$, we arrive at the desired error bound
$$\|\phi - \overline{\phi}\|_{1}^{2} + \|\mu - \overline{\mu}\|_{1}^{2} \leq C (\|\phi\|_{2}^{2} + \|\mu\|_{2}^{2})h^{2},$$
which completes the proof of the theorem.$\qquad \Box$
\end{proof}
\subsection{Error Estimates}
We estimate the error of numerical scheme \eqref{f1}-\eqref{f3}. The error equations can be obtained by subtracting the corresponding equations in \eqref{f1}-\eqref{f3} from \eqref{r1}-\eqref{r3} :
\begin{equation}\label{wucha1}
  \left\{
   \begin{aligned}
 &(\phi_{t}^{n+1}-\partial_{t}\phi_{h}^{n+1},u_{h})=-(\nabla (\mu^{n+1}-\mu_{h}^{n+1}),\nabla u_{h}), &&\forall u_{h}\in V_h,\\
 &(\mu^{n+1}-\mu_{h}^{n+1},v_{h})=(\nabla (\phi^{n+1}-\phi_{h}^{n+1}),\nabla v_{h})+\\
 &\qquad \qquad \qquad \qquad \left(r^{n+1}\frac{F'(\phi^{n+1})}{\sqrt{E(\phi^{n+1})}}-r_{h}^{n+1}\frac{F'(\phi_{h}^{n})}{\sqrt{E(\phi_{h}^{n})}},v_{h}\right),&&\forall v_{h}\in V_h,\\
 &r_{t}^{n+1}-\partial_{t}r_{h}^{n+1}=\left(\frac{F'(\phi^{n+1})}{2\sqrt{E(\phi^{n+1})}},\phi_{t}^{n+1}\right)-\left(\frac{F'(\phi_{h}^{n})}{2\sqrt{E(\phi_{h}^{n})}}, \partial_{t}\phi_{h}^{n+1}\right).
   \end{aligned}
   \right.
  \end{equation}

Considering the projection \eqref{t1}-\eqref{t3},the error equation \eqref{wucha1} can be rewritten as:
\begin{empheq}[left=\empheqlbrace]{alignat=2}
   & (\overline{\phi}_{t}^{n+1} - \partial_{t}\phi_{h}^{n+1}, u_h) = -(\nabla (\overline{\mu}^{n+1} - \mu_{h}^{n+1}), \nabla u_h), &&\qquad \forall u_h \in V_h, \label{wc1} \\
    &(\overline{\mu}^{n+1} - \mu_{h}^{n+1}, v_h) = (\nabla (\overline{\phi}^{n+1} - \phi_{h}^{n+1}), \nabla v_h) \nonumber \\
    &\qquad \qquad \qquad \qquad + \Bigl(r^{n+1}\frac{F'(\overline{\phi}^{n+1})}{\sqrt{E(\overline{\phi}^{n+1})}} - r_{h}^{n+1}\frac{F'(\phi_{h}^{n})}{\sqrt{E(\phi_{h}^{n})}}, v_h\Bigr), &&\qquad \forall v_h \in V_h, \label{wc2} \\
    &r_{t}^{n+1} - \partial_{t}r_{h}^{n+1} = \Bigl(\frac{F'(\phi^{n+1})}{2\sqrt{E(\phi^{n+1})}}, \phi_{t}^{n+1}\Bigr) - \Bigl(\frac{F'(\phi_{h}^{n})}{2\sqrt{E(\phi_{h}^{n})}}, \partial_{t}\phi_{h}^{n+1}\Bigr), \label{wc3}
\end{empheq}
\begin{theorem}\label{thm:error_estimate}
Assume that $\phi,\mu\in H^{2}(\Omega)\cap W^{1,\infty}(J;H^{1}(\Omega))\cap W^{2,\infty}(J;W^{1,\infty}(\Omega)),
r \in L^{\infty}(J)\cap W^{2,\infty}(J)$ and $F \in C^{2}(\mathbb{R})$ with $F''$ bounded on bounded sets.
Then for the fully discrete scheme \eqref{f1}-\eqref{f3}, there exists a positive constant $C$ independent of $h$ and $\Delta t$ such that for any $N \geq 0$ with $t^{N+1} = (N+1)\Delta t \leq T$,
\begin{align}
\|\phi^{N+1}-\phi_{h}^{N+1}\|_{1}\leq C(h+\Delta t),\label{phimur6}\\
\left(\sum_{n=0}^{N}\Delta t\|\mu^{n+\frac{1}{2}}-\mu_{h}^{n+\frac{1}{2}}\|_{1}^2\right)^{1/2}\leq C(h+\Delta t),\label{phimur7}\\
(r^{N+1}-r_{h}^{N+1})\leq C(h^{2}+\Delta t).\label{phimur8}
\end{align}
\end{theorem}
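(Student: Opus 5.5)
We split the errors through the projection of Theorem~\ref{thm:projection_error}, writing
\[
\phi^{n}-\phi_h^{n}=(\phi^n-\overline{\phi}^n)+(\overline{\phi}^n-\phi_h^n)=:\rho_\phi^n+\theta_\phi^n,
\qquad
\mu^{n}-\mu_h^{n}=:\rho_\mu^n+\theta_\mu^n,
\]
and $e_r^n:=r^n-r_h^n$. The projection parts $\rho_\phi,\rho_\mu$ are already bounded by $Ch$ in $H^1$ by \eqref{tywc}, and $\rho_\phi$ by $Ch^2$ in $L^2$ by \eqref{eq:rho_phi_est}. It therefore suffices to bound $\theta_\phi,\theta_\mu,e_r$ and finish with the triangle inequality. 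We work from the rewritten error equations \eqref{wc1}--\eqref{wc3}.

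First we insert the truncation errors explicitly. Write $\overline{\phi}_t^{n+1}=\partial_t\overline{\phi}^{n+1}+R_1^{n+1}$ with $\|R_1^{n+1}\|\le C\Delta t$; this follows from Taylor expansion and the $W^{2,\infty}(J;\cdot)$ regularity, since $\pi_h$ commutes with $\partial_t$ and is $L^2$-stable. Similarly write $r_t^{n+1}=\partial_t r^{n+1}+R_2^{n+1}$ with $|R_2^{n+1}|\le C\Delta t$. In \eqref{wc2} and \eqref{wc3} we add and subtract $r^{n+1}F'(\phi_h^n)/\sqrt{E(\phi_h^n)}$ and $F'(\phi^n)/(2\sqrt{E(\phi^n)})$, so that the nonlinear differences split into three kinds of terms:
\begin{itemize}
\item time-lag terms $\phi^{n+1}-\phi^n=O(\Delta t)$;
\item projection terms controlled by $\|\rho_\phi^n\|\le Ch^2$, via the Lipschitz argument of \eqref{F};
\item genuine error terms $\|\theta_\phi^n\|$ and $e_r^{n+1}$.
\end{itemize}

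Next we mimic the energy identity \eqref{rff1}. Take $u_h=\theta_\mu^{n+1}$ in \eqref{wc1} and $v_h=\partial_t\theta_\phi^{n+1}$ in \eqref{wc2}, and multiply \eqref{wc3} by $2e_r^{n+1}$. The key structural point is that the term $\bigl(e_r^{n+1}F'(\phi_h^n)/\sqrt{E(\phi_h^n)},\partial_t\theta_\phi^{n+1}\bigr)$ coming from \eqref{wc2} cancels against the corresponding term $e_r^{n+1}\bigl(F'(\phi_h^n)/\sqrt{E(\phi_h^n)},\partial_t\theta_\phi^{n+1}\bigr)$ from \eqref{wc3}, exactly as in the stability proof. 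Summing then gives
\begin{multline*}
\frac{1}{2\Delta t}\bigl(\|\nabla\theta_\phi^{n+1}\|^2-\|\nabla\theta_\phi^n\|^2\bigr)
+\frac{1}{\Delta t}\bigl(|e_r^{n+1}|^2-|e_r^n|^2\bigr)
+\|\nabla\theta_\mu^{n+1}\|^2\\
\le \bigl|(\text{remainders},\partial_t\theta_\phi^{n+1})\bigr|+\bigl|(\text{remainders},\theta_\mu^{n+1})\bigr|+\cdots .
\end{multline*}

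The remaining terms contain $\partial_t\theta_\phi^{n+1}$, which has no dissipation of its own. We control them in two ways. Testing \eqref{wc1} with a general $u_h$ gives the dual bound $\|\partial_t\theta_\phi^{n+1}\|_{-1}\le C\|\nabla\theta_\mu^{n+1}\|+C\Delta t$. Alternatively, we can use discrete summation by parts in time to move $\partial_t$ onto the smooth factors. Both routes require $H^1$-type control of the remainder factors $r^{n+1}F'(\phi_h^n)/\sqrt{E(\phi_h^n)}-\cdots$, and hence $\phi_h^n$ bounded in $L^\infty$ so that $F''$ is bounded. We obtain this by induction: assume $\|\theta_\phi^n\|_1\le C(h+\Delta t)$ and use the inverse inequality $\|\theta_\phi^n\|_{L^\infty}\le Ch^{-d/2}\|\theta_\phi^n\|$, together with a slightly sharper $L^2$ bound. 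To control the $L^2$ part of $\theta_\mu$, we also test \eqref{wc2} with $\theta_\mu^{n+1}$.

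We then apply Young's inequality, absorb $\frac12\|\nabla\theta_\mu^{n+1}\|^2$ into the left side, sum over $n$, and use discrete Gronwall, noting that $\theta_\phi^0=0$ and $e_r^0=0$. This yields \eqref{phimur6}. Estimate \eqref{phimur7} follows from the accumulated dissipation term combined with the $L^2$ bound on $\theta_\mu$ and \eqref{tywc}. For \eqref{phimur8}, the $r$-equation involves $\phi$ only through $L^2$ pairings, so the projection contributes $O(h^2)$ via \eqref{F} and \eqref{t1}; the rate $h^2$ then requires bounding the $\theta_\phi$ contribution in a weaker norm.

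\textbf{Main obstacle.} The hardest step is closing the induction that keeps $\phi_h^n$ uniformly bounded in $L^\infty$, which is needed for the Lipschitz estimates of $F'/\sqrt{E}$. It is coupled with handling $\partial_t\theta_\phi$ in the nonlinear term, and with extracting the $h^2$ rate for $r$, which needs an $L^2$ or $H^{-1}$ bound on $\theta_\phi$ rather than an $H^1$ one. If the $h^{-d/2}$ inverse-inequality route fails, the first fallback is a time-step restriction $\Delta t\le Ch^{d/4}$ to close the induction.
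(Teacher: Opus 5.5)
\textbf{Overall.} Your skeleton matches the paper's: the projection splitting, the tests $u_h=\theta_\mu^{n+1}$ in \eqref{wc1} and $v_h=\partial_t\theta_\phi^{n+1}$, $v_h=\theta_\mu^{n+1}$ in \eqref{wc2}, multiplying \eqref{wc3} by $2e_r^{n+1}$, the dual bound \eqref{phih15}, then Young and Gronwall. Several of your details are sounder than the paper's:
\begin{itemize}
\item The truncation errors are $O(\Delta t)$, not $O(\Delta t^2)$ as the paper writes.
\item You pair $F'(\phi_h^n)/(2\sqrt{E(\phi_h^n)})$ with $\partial_t\theta_\phi^{n+1}$, so the $e_r$ cross terms cancel exactly. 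The paper's \eqref{rh31} instead pairs $F'(\phi^{n+1})/(2\sqrt{E(\phi^{n+1})})$ with it, drops the leftover, and bounds $\|\partial_t\phi_h^{n+1}\|$ in $A_8$ without justification.
\item The paper never addresses $L^\infty$ control of $\phi_h^n$; you do.
\end{itemize}
One addition: the $H^1$ bound used in the dual pairing also needs $F''$ locally Lipschitz and $\nabla\overline\phi\in L^\infty$. Having $F''$ bounded on bounded sets is not enough.

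\textbf{The gap.} The real gap is \eqref{phimur8}, and with it how you set up the induction. You take $\theta_\phi$ to be only $O(h+\Delta t)$ in $H^1$, and defer the $h^2$ rate for $r$ to an unspecified weaker-norm bound on $\theta_\phi$. But in your own setup every spatial consistency error is already $O(h^2)$:
\begin{itemize}
\item By \eqref{t1}--\eqref{t3}, equations \eqref{wc1}--\eqref{wc2} contain no $\rho$-terms. The nonlinearity in \eqref{wc2} already involves $\overline\phi^{n+1}$, and $\overline\phi^{n+1}-\phi_h^n=(\overline\phi^{n+1}-\overline\phi^{n})+\theta_\phi^n$ carries no $h$.
\item Equation \eqref{wc3} sees only $\rho_\phi^n$ and $\partial_t\rho_\phi^{n+1}$, and only through $L^2$ pairings with bounded functions.
\end{itemize}
Hence the single coupled Gronwall argument already gives $\|\theta_\phi^{N+1}\|_1^2+\sum_{n=0}^{N}\Delta t\|\theta_\mu^{n+1}\|_1^2+(e_r^{N+1})^2\le C(h^4+\Delta t^2)$. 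This is the paper's \eqref{phimur5}. Since $e_r$ has no projection part, \eqref{phimur8} follows immediately. The $O(h)$ in \eqref{phimur6}--\eqref{phimur7} comes only from $\rho_\phi,\rho_\mu$ via \eqref{tywc}.

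\textbf{Consequence for the induction.} The hypothesis should be the superconvergent one, $\|\theta_\phi^n\|_1\le C_*(h^2+\Delta t)$.
\begin{itemize}
\item With your hypothesis $C(h+\Delta t)$ and $\|\theta_\phi^n\|_{L^\infty}\le Ch^{-d/2}\|\theta_\phi^n\|$, the bound is at least of order $h^{1-d/2}$, which is not small.
\item Your fallback $\Delta t\le Ch^{d/4}$ does not help: it only gives $h^{-d/2}\Delta t\le Ch^{-d/4}$.
\item With the superconvergent hypothesis and the discrete Sobolev inequalities $\|v_h\|_{L^\infty}\le C(1+|\ln h|)^{1/2}\|v_h\|_1$ for $d=2$ and $\|v_h\|_{L^\infty}\le Ch^{-1/2}\|v_h\|_1$ for $d=3$, the induction closes under a mild coupling such as $\Delta t=o(h^{1/2})$ in 3D.
\end{itemize}
That coupling is not in the theorem's statement, and the paper's proof never confronts the issue.

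\textbf{Caveat on the projection.} This superconvergence rests entirely on $\overline\mu$ satisfying \eqref{t2} and \eqref{t3} simultaneously, as Theorem~\ref{thm:projection_error} asserts. But \eqref{t2} alone already determines $\overline\mu\in V_h$, so \eqref{t3} is an additional constraint. The Lax--Milgram argument applied to the summed form does not deliver both. If you replace it by Ritz projections, $\rho_\mu$ re-enters \eqref{wc2}. Its pairing with $\partial_t\theta_\phi^{n+1}$, and that of the nonlinear projection error, would then need your summation by parts in time to keep the $h^2$ rate for $r$.
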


Proof. For simplicity, we set
\begin{align}
 &e_{\phi}^{n}=\phi^{n}-\phi_{h}^{n}=\phi^{n}-\overline{\phi}^{n}+\overline{\phi}^{n}-\phi_{h}^{n}=\rho_{\phi}^{n}+\theta_{\phi}^{n},\nonumber\\
 &e_{\mu}^{n}=\mu^{n}-\mu_{h}^{n}=\mu^{n}-\overline{\mu}^{n}+\overline{\mu}^{n}-\mu_{h}^{n}=\rho _{\mu}^{n}+\theta _{\mu}^{n},\nonumber\\
 &e_{r}^{n}=r^{n}-r_{h}^{n}.\nonumber
\end{align}

\eqref{wc1} can be rewritten as
\begin{align}
 (\partial_{t}\theta_{\phi}^{n+1},u_{h})=-(\overline{\phi}_{t}^{n+1}-\partial_{t}\overline{\phi}^{n+1},u_{h})-(\nabla \theta_{\mu}^{n+1},\nabla u_{h}),\label{phih11}
\end{align}
where
\begin{align}
\|\overline{\phi}_{t}^{n+1}-\partial_{t}\overline{\phi}^{n+1}\|\leq C\|\overline{\phi}\|_{W^{2,\infty}(J;L^{\infty}(\Omega))}\Delta t^{2}\leq C \|\phi\|_{W^{2,\infty}(J;L^{\infty}(\Omega))}\Delta t^{2}.
\end{align}

Let $u_{h}=\theta_{\phi}^{n+1}$ in \eqref{phih11},
\begin{align}
 \frac{\|\theta_{\phi}^{n+1}\|^2-\|\theta_{\phi}^{n}\|^2+\|\theta_{\phi}^{n+1}-\theta_{\phi}^{n}\|^2}{2\Delta t}=-(\overline{\phi}_{t}^{n+1}-\partial_{t}\overline{\phi}^{n+1},\theta_{\phi}^{n+1})-(\nabla \theta_{\mu}^{n+1},\nabla \theta_{\phi}^{n+1}).\label{phih12}
\end{align}

Let $u_{h}=\theta_{\mu}^{n+1}$ in \eqref{phih11},
\begin{align}
 (\partial_{t}\theta_{\phi}^{n+1},\theta_{\mu}^{n+1})=-(\overline{\phi}_{t}^{n+1}-\partial_{t}\overline{\phi}^{n+1},\theta_{\mu}^{n+1})-\|\nabla \theta_{\mu}^{n+1}\|^2.\label{phih13}
\end{align}

For all $x \in H^{1}(\Omega)$, let $x_{h} = \pi_{h} x$ in equation \eqref{phih11}. Applying Young's inequality, we obtain the following estimate:
\begin{align}
(\partial_{t}\theta_{\phi}^{n+1},x) &= (\partial_{t}\theta_{\phi}^{n+1},x_{h})= -(\overline{\phi}_{t}^{n+1}-\partial_{t}\overline{\phi}^{n+1},x_{h})-(\nabla \theta_{\mu}^{n+1},\nabla x_{h}) \nonumber \\
&\leq \|\overline{\phi}_{t}^{n+1}-\partial_{t}\overline{\phi}^{n+1}\|\|x_{h}\|+\|\nabla \theta_{\mu}^{n+1}\|\|\nabla x_{h}\| \nonumber \\
&\leq \left(\|\overline{\phi}_{t}^{n+1}-\partial_{t}\overline{\phi}^{n+1}\|+\|\nabla \theta_{\mu}^{n+1}\|\right)\|x_{h}\|_{1}\nonumber \\
&\leq \left(\|\overline{\phi}_{t}^{n+1}-\partial_{t}\overline{\phi}^{n+1}\|+\|\nabla \theta_{\mu}^{n+1}\|\right)\|x\|_{1}. \label{phih14}
\end{align}

Consequently, we derive the bound for the negative norm:
\begin{align}
\|\partial_{t}\theta_{\phi}^{n+1}\|_{-1} \leq& \|\overline{\phi}_{t}^{n+1}-\partial_{t}\overline{\phi}^{n+1}\|+\|\nabla \theta_{\mu}^{n+1}\|\nonumber\\
\leq& C \|\phi\|_{W^{2,\infty}(J;L^{\infty}(\Omega))}\Delta t^{2}+\|\nabla \theta_{\mu}^{n+1}\|.\label{phih15}
\end{align}

\eqref{wc2} can be rewritten as
\begin{align}
 (\theta_{\mu}^{n+1},v_{h})=&(\nabla \theta_{\phi}^{n+1},\nabla v_{h})+
 \left(e_{r}^{n+1} \frac{F'(\phi_{h}^{n})}{\sqrt{E(\phi_{h}^{n})}}+r^{n+1}\left(\frac{F'(\overline{\phi}^{n+1})}{\sqrt{E(\overline{\phi}^{n+1})}}-\frac{F'(\phi_{h}^{n})}{\sqrt{E(\phi_{h}^{n})}}\right),v_{h}\right).\label{muh21}
\end{align}

Let $v_{h}=\partial_{t}\theta_{\phi}^{n+1}$ in \eqref{muh21},
\begin{align}
 (\theta_{\mu}^{n+1},\partial_{t}\theta_{\phi}^{n+1})=&\frac{\|\nabla\theta_{\phi}^{n+1}\|^{2}-\|\nabla\theta_{\phi}^{n}\|^{2}+\|\nabla(\theta_{\phi}^{n+1}-\theta_{\phi}^{n})\|^{2}}{2 \Delta t}+\left(e_{r}^{n+1}\frac{F'(\phi_{h}^{n})}{\sqrt{E(\phi_{h}^{n})}},\partial_{t}\theta_{\phi}^{n+1}\right)\nonumber\\
 &+\left(r^{n+1}\left(\frac{F'(\overline{\phi}^{n+1})}{\sqrt{E(\overline{\phi}^{n+1})}}-\frac{F'(\phi_{h}^{n})}{\sqrt{E(\phi_{h}^{n})}}\right),\partial_{t}\theta_{\phi}^{n+1}\right).\label{muh22}
\end{align}

Let $v_{h}=\theta_{\mu}^{n+1}$ in \eqref{muh21},
\begin{align}
 \|\theta_{\mu}^{n+1}\|^{2}=&(\nabla \theta_{\phi}^{n+1},\nabla \theta_{\mu}^{n+1})+
 \left(e_{r}^{n+1}\frac{F'(\phi_{h}^{n})}{\sqrt{E(\phi_{h}^{n})}},\theta_{\mu}^{n+1}\right)\nonumber\\
 &+\left(r^{n+1}\left(\frac{F'(\overline{\phi}^{n+1})}{\sqrt{E(\overline{\phi}^{n+1})}}-\frac{F'(\phi_{h}^{n})}{\sqrt{E(\phi_{h}^{n})}}\right),\theta_{\mu}^{n+1}\right).\label{muh23}
\end{align}

\eqref{wc3} can be rewritten as
\begin{align}
 \partial_{t}e_{r}^{n+1}=&-(r_{t}^{n+1}-\partial_{t}r^{n+1})+\left(\frac{F'(\phi^{n+1})}{2\sqrt{E(\phi^{n+1})}},\phi_{t}^{n+1}-\partial_{t}\phi^{n+1}\right)+\nonumber\\
&\left(\frac{F'(\phi^{n+1})}{2\sqrt{E(\phi^{n+1})}}, \partial_{t}\theta_{\phi}^{n+1}\right)+\left(\frac{F'(\phi^{n+1})}{2\sqrt{E(\phi^{n+1})}}-\frac{F'(\phi_{h}^{n})}{2\sqrt{E(\phi_{h}^{n})}},\partial_{t}\phi^{n+1}_{h}\right),\label{rh31}
\end{align}
where
\begin{align}
|r_{t}-\partial_{t}r^{n+1}|\leq  C \|r\|_{W^{2,\infty}(J)}\Delta t^{2}.
\end{align}

Multiplying equation \eqref{rh31} by $2 e_{r}^{n+1}$, we have
\begin{align}
&\frac{(e_{r}^{n+1})^2-(e_{r}^{n})^2+(e_{r}^{n+1}-e_{r}^{n})^2}{\Delta t}\nonumber\\
=&-2(r_{t}^{n+1}-\partial_{t}r^{n+1})e_{r}^{n+1}+\left(e_{r}^{n+1} \frac{F'(\phi^{n+1})}{\sqrt{E(\phi^{n+1})}},\phi_{t}^{n+1}-\partial_{t}\phi^{n+1}\right)+\nonumber\\
&\left(e_{r}^{n+1}\frac{F'(\phi^{n+1})}{\sqrt{E(\phi^{n+1})}}, \partial_{t}\theta_{\phi}^{n+1}\right)+\left(e_{r}^{n+1} \left(\frac{F'(\phi^{n+1})}{\sqrt{E(\phi^{n+1})}}-\frac{F'(\phi_{h}^{n})}{\sqrt{E(\phi_{h}^{n})}}\right),\partial_{t}\phi^{n+1}_{h}\right),\label{rh32}
\end{align}
where
\begin{align}
\|\phi_{t}-\partial_{t}\phi^{n+1}\|\leq C \|\phi\|_{W^{2,\infty}(J;L^{\infty}(\Omega))}\Delta t^{2}.
\end{align}

Together with $\eqref{phih12},\eqref{phih13},\eqref{phih14},\eqref{muh22},\eqref{muh23}$ and $\eqref{rh32}$, we can derive
\begin{align}
&\frac{\|\theta_{\phi}^{n+1}\|_{1}^{2}-\|\theta_{\phi}^{n}\|_{1}^{2}+\|\theta_{\phi}^{n+1}-\theta_{\phi}^{n}\|_{1}^{2}}{2 \Delta t}+\|\theta_{\mu}^{n+1}\|_{1}^{2}+\frac{(e_{r}^{n+1})^2-(e_{r}^{n})^2+(e_{r}^{n+1}-e_{r}^{n})^2}{\Delta t}\nonumber\\
=&-(\overline{\phi}_{t}^{n+1}-\partial_{t}\overline{\phi}^{n+1},\theta_{\phi}^{n+1})-(\overline{\phi}_{t}^{n+1}-\partial_{t}\overline{\phi}^{n+1},\theta_{\mu}^{n+1})-\nonumber\\
&\left(r^{n+1}\left(\frac{F'(\overline{\phi}^{n+1})}{\sqrt{E(\overline{\phi}^{n+1})}}-\frac{F'(\phi_{h}^{n})}{\sqrt{E(\phi_{h}^{n})}}\right),\partial_{t}\theta_{\phi}^{n+1}\right)+
 \left(e_{r}^{n+1}\frac{F'(\phi_{h}^{n})}{\sqrt{E(\phi_{h}^{n})}},\theta_{\mu}^{n+1}\right)\nonumber\\
 &+\left(r^{n+1}\left(\frac{F'(\overline{\phi}^{n+1})}{\sqrt{E(\overline{\phi}^{n+1})}}-\frac{F'(\phi_{h}^{n})}{\sqrt{E(\phi_{h}^{n})}}\right),\theta_{\mu}^{n+1}\right)-2 (r_{t}^{n+1}-\partial_{t}r^{n+1}) e_{r}^{n+1}\nonumber\\
 &+\left(e_{r}^{n+1}\frac{F'(\phi^{n+1})}{\sqrt{E(\phi^{n+1})}},\phi_{t}^{n+1}-\partial_{t}\phi^{n+1}\right)+\nonumber\\
&\left(e_{r}^{n+1}\left(\frac{F'(\phi^{n+1})}{\sqrt{E(\phi^{n+1})}}-\frac{F'(\phi_{h}^{n})}{\sqrt{E(\phi_{h}^{n})}}\right),\partial_{t}\phi^{n+1}_{h}\right):=\sum_{i=1}^{8}A_i.\label{phimur1}
\end{align}
\begin{align}
A_{1}=&-(\overline{\phi}_{t}^{n+1}-\partial_{t}\overline{\phi}^{n+1},\theta_{\phi}^{n+1})\leq\|\overline{\phi}_{t}^{n+1}-\partial_{t}\overline{\phi}^{n+1}\|^{2}+\|\theta_{\phi}^{n+1}\|^{2}\nonumber\\
\leq&C \|\overline{\phi}\|_{W^{2,\infty}(J;L^{\infty}(\Omega))}^{2}\Delta t^{4}+\|\theta_{\phi}^{n+1}\|^{2}\leq C \|\phi\|_{W^{2,\infty}(J;L^{\infty}(\Omega))}^{2}\Delta t^{4}+\|\theta_{\phi}^{n+1}\|_{1}^{2}.\label{A1}
\end{align}
\begin{align}
A_{2}=&-(\overline{\phi}_{t}^{n+1}-\partial_{t}\overline{\phi}^{n+1},\theta_{\mu}^{n+1})\leq 2 \|\overline{\phi}_{t}^{n+1}-\partial_{t}\overline{\phi}^{n+1}\|^{2}+\frac{1}{8} \|\theta_{\mu}^{n+1}\|^{2}\nonumber\\
\leq& C \|\overline{\phi}\|_{W^{2,\infty}(J;L^{\infty}(\Omega))}^{2}\Delta t^{4}+\frac{1}{8}\|\theta_{\mu}^{n+1}\|^{2}\leq C \|\phi\|_{W^{2,\infty}(J;L^{\infty}(\Omega))}^{2}\Delta t^{4}+\frac{1}{8}\|\theta_{\mu}^{n+1}\|_{1}^{2}.\label{A2}
\end{align}

Considering
\begin{align}
&\left\|r^{n+1}\left(\frac{F'(\overline{\phi}^{n+1})}{\sqrt{E(\overline{\phi}^{n+1})}}-\frac{F'(\phi_{h}^{n})}{\sqrt{E(\phi_{h}^{n})}}\right)\right\|\nonumber\\
\leq&\|r\|_{L^{\infty}(J)}\left(\left\|\frac{F'(\overline{\phi}^{n+1})}{\sqrt{E(\overline{\phi}^{n+1})}}-\frac{F'(\phi_{h}^{n+1})}{\sqrt{E(\phi_{h}^{n+1})}}\right\|+\left\|\frac{F'(\phi_{h}^{n+1})}{\sqrt{E(\phi_{h}^{n+1})}}-\frac{F'(\phi_{h}^{n})}{\sqrt{E(\phi_{h}^{n+1})}}\right\|\right)\nonumber\\
\leq & C \|r\|_{L^{\infty}(J)}\left(\|\theta_{\phi}^{n+1}\|+\|\phi_{h}^{n+1}-\phi_{h}^{n}\|\right)\nonumber\\
\leq & C \|r\|_{L^{\infty}(J)}\left(\|\theta_{\phi}^{n+1}\|_{1}+\|\phi\|_{W^{1,\infty}(J,H^{1}(\Omega))}\Delta t\right),\label{FF1}
\end{align}
similarly,\begin{align}
\left\|r^{n+1}\left(\frac{F'(\overline{\phi}^{n+1})}{\sqrt{E(\overline{\phi}^{n+1})}}-\frac{F'(\phi_{h}^{n})}{\sqrt{E(\phi_{h}^{n})}}\right)\right\|_{1}\leq C\|r\|_{L^{\infty}(J)}\left(\|\theta_{\phi}^{n+1}\|_{1}+\|\phi\|_{W^{1,\infty}(J,H^{1}(\Omega))}\Delta t\right).\label{FF2}
\end{align}

Considering (\ref{phih15}) and (\ref{FF2}),
\begin{align}
A_{3} =& -\left(r^{n+1}\left(\frac{F'(\overline{\phi}^{n+1})}{\sqrt{E(\overline{\phi}^{n+1})}}-\frac{F'(\phi_{h}^{n})}{\sqrt{E(\phi_{h}^{n})}}\right),\partial_{t}\theta_{\phi}^{n+1}\right)\nonumber\\
\leq& 2 \left\|r^{n+1}\left(\frac{F'(\overline{\phi}^{n+1})}{\sqrt{E(\overline{\phi}^{n+1})}}-\frac{F'(\phi_{h}^{n})}{\sqrt{E(\phi_{h}^{n})}}\right)\right\|_{1}^{2}+\frac{1}{8}\|\partial_{t}\theta_{\phi}^{n+1}\|_{-1}^{2}\nonumber\\
\leq& C\|r\|_{L^{\infty}(J)}^{2}\left(\|\theta_{\phi}^{n+1}\|_{1}^{2}+\|\phi\|_{W^{1,\infty}(J,H^{1}(\Omega))}^2\Delta t^{2}\right)+\nonumber\\
&\frac{1}{8}\left(C \|\phi\|_{W^{2,\infty}(J;L^{\infty}(\Omega))}^2\Delta t^{4}+\|\nabla \theta_{\mu}^{n+1}\|^2\right)\nonumber\\
\leq& C \|r\|_{L^{\infty}(J)}^{2}\|\theta_{\phi}^{n+1}\|_{1}^{2}+C\left( \|r\|_{L^{\infty}(J)}^{2}\|\phi\|_{W^{1,\infty}(J,H^{1}(\Omega))}^2\right.\nonumber\\
&\left.+\|\phi\|_{W^{2,\infty}(J;L^{\infty}(\Omega))}^2\Delta t^{2}\right)\Delta t^{2}+\frac{1}{8}\| \theta_{\mu}^{n+1}\|_1^2.\label{FF3}
\end{align}
\begin{align}
A_4=&\left(e_{r}^{n+1}\frac{F'(\phi_{h}^{n})}{\sqrt{E(\phi_{h}^{n})}},\theta_{\mu}^{n+1}\right)\leq  2 \left\|e_{r}^{n+1}\frac{F'(\phi_{h}^{n})}{\sqrt{E(\phi_{h}^{n})}}\right\|^{2}+\frac{1}{8}\left\|\theta_{\mu}^{n+1}\right\|^{2}\nonumber\\
\leq& C (e_{r}^{n+1})^{2}+\frac{1}{8}\left\|\theta_{\mu}^{n+1}\right\|_{1}^{2}.\label{A4}
\end{align}
Considering (\ref{FF1}),
\begin{align}
A_5=&\left(r^{n+1}\left(\frac{F'(\overline{\phi}^{n+1})}{\sqrt{E(\overline{\phi}^{n+1})}}-\frac{F'(\phi_{h}^{n})}{\sqrt{E(\phi_{h}^{n})}}\right),\theta_{\mu}^{n+1}\right)\nonumber\\
\leq & 2 \left\|r^{n+1}\left(\frac{F'(\overline{\phi}^{n+1})}{\sqrt{E(\overline{\phi}^{n+1})}}-\frac{F'(\phi_{h}^{n})}{\sqrt{E(\phi_{h}^{n})}}\right)\right\|^{2}+\frac{1}{8}\left\|\theta_{\mu}^{n+1}\right\|^{2}\nonumber\\
\leq & C \|r\|_{L^{\infty}(J)}^{2}\left(\|\theta_{\phi}^{n+1}\|_{1}^{2}+\|\phi\|_{W^{1,\infty}(J,H^{1}(\Omega))}^2\Delta t^{2}\right)+\frac{1}{8}\left\|\theta_{\mu}^{n+1}\right\|_{1}^{2}.
\end{align}
\begin{align}
A_6=&-2 (r_{t}^{n+1}-\partial_{t}r^{n+1})\cdot e_{r}^{n+1}\leq C (r_{t}^{n+1}-\partial_{t}r^{n+1})^{2}+C(e_{r}^{n+1})^{2}\nonumber\\
\leq & C \|r\|_{W^{2,\infty}(J)}^{2} \Delta t^{4}+C (e_{r}^{n+1})^{2}.\label{A6}
\end{align}
\begin{align}
A_7=&\left(e_{r}^{n+1}\frac{F'(\phi^{n+1})}{\sqrt{E(\phi^{n+1})}},\phi_{t}^{n+1}-\partial_{t}\phi^{n+1}\right)\leq \left\|e_{r}^{n+1}\frac{F'(\phi^{n+1})}{\sqrt{E(\phi^{n+1})}}\right\|^{2}+ \left\|\phi_{t}^{n+1}-\partial_{t}\phi^{n+1}\right\|^{2}\nonumber\\
\leq & C (e_{r}^{n+1})^{2}+C \left\|\phi\right\|_{W^{2,\infty}(J,L^{\infty}(\Omega))}^{2}\Delta t^{4}.\label{A7}
\end{align}
\begin{align}
A_8=&\left(e_{r}^{n+1}\left(\frac{F'(\phi^{n+1})}{\sqrt{E(\phi^{n+1})}}-\frac{F'(\phi_{h}^{n})}{\sqrt{E(\phi_{h}^{n})}}\right),\partial_{t}\phi^{n+1}_{h}\right)\nonumber\\
\leq &  \left|e_{r}^{n+1}\right|\left\|\frac{F'(\phi^{n+1})}{\sqrt{E(\phi^{n+1})}}-\frac{F'(\phi_{h}^{n})}{\sqrt{E(\phi_{h}^{n})}}\right\|\left\|\partial_{t}\phi^{n+1}_{h}\right\|\nonumber\\
\leq & C \|\phi\|_{W^{1,\infty}(J,H^{1}(\Omega))}\left((e_{r}^{n+1})^{2}+\|\phi^{n+1}-\phi_{h}^{n+1}\|^{2}+\|\phi_{h}^{n+1}-\phi_{h}^{n}\|^{2}\right)\nonumber\\
\leq & C \|\phi\|_{W^{1,\infty}(J,H^{1}(\Omega))}\left((e_{r}^{n+1})^{2}+\|\rho_{\phi}^{n+1}\|^{2}+\|\theta_{\phi}^{n+1}\|_{1}^{2}+\|\phi\|_{W^{1,\infty}(J,H^{1}(\Omega))}^2\Delta t^{2}\right).\label{A8}
\end{align}

Based on the above analysis, equation\eqref{phimur1} can be estimated by
\begin{align}
&\frac{\|\theta_{\phi}^{n+1}\|_{1}^{2}-\|\theta_{\phi}^{n}\|_{1}^{2}}{2 \Delta t}+\|\theta_{\mu}^{n+1}\|_{1}^{2}+\frac{(e_{r}^{n+1})^2-(e_{r}^{n})^2}{\Delta t}\nonumber\\
\leq&C\|\phi\|_{W^{1,\infty}(J;H^{1}(\Omega))}^{2} \|\rho_{\phi}^{n+1}\|^{2}+C \left(\|\phi\|_{W^{2,\infty}(J;L^{\infty}(\Omega))}^{2}\Delta t^{2}+\|r\|_{L^{\infty}(J)}^{2}\|\phi\|_{W^{1,\infty}(J;H^{1}(\Omega))}^{2}\right.\nonumber\\
&\left.+\|\phi\|_{W^{1,\infty}(J;H^{1}(\Omega))}^{2}+\|r\|_{W^{2,\infty}(J)}^{2}\Delta t^{2}\right)\Delta t^{2}+C \left(\|\phi\|_{W^{1,\infty}(J;H^{1}(\Omega))}^{2}+\right.\nonumber\\
&\left.\|r\|_{L^{\infty}(J)}^{2}\right)\|\theta_{\phi}^{n+1}\|_{1}^2+C \|\phi\|_{W^{1,\infty}(J;L^{\infty}(\Omega))}^{2}(e_{r}^{n+1})^2+\frac{1}{2}\|\theta_{\mu}^{n+1}\|_{1}^2\nonumber\\
\leq &C (h^4+\Delta t^{2})+C \left(\|\theta_{\phi}^{n+1}\|_{1}^2+(e_{r}^{n+1})^2\right)+\frac{1}{2}\|\theta_{\mu}^{n+1}\|_{1}^2.\label{phimur3}
\end{align}
Assuming $\theta_{\phi}^{0}=0,\theta_{\mu}^{0}=0,e_{r}^{0}=0$, multiplying equation \eqref{phimur3} by $2 \Delta t$, summing over $n,n=0,1,2,...,N$, we can obtain
\begin{align}
&\|\theta_{\phi}^{N+1}\|_{1}^{2}+\sum_{n=0}^{N}\Delta t\|\theta_{\mu}^{n+1}\|_{1}^{2}+2 (e_{r}^{N+1})^2\nonumber\\
\leq&C(h^4+\Delta t^{2})+C\sum_{n=0}^{N}\Delta t\left(\|\theta_{\phi}^{n+1}\|_{1}^2+(e_{r}^{n+1})^2\right).\label{phimur4}
\end{align}
It follows from Gronwall's inequality that
\begin{align}
\|\theta_{\phi}^{N+1}\|_{1}^{2}+\sum_{n=0}^{N}\Delta t\|\theta_{\mu}^{n+1}\|_{1}^{2}+(e_{r}^{N+1})^2\leq C(h^{4}+\Delta t^{2}).\label{phimur5}
\end{align}
Thus, we can obtain the desired result \eqref{phimur6}-\eqref{phimur8}. $\qquad\Box$
\subsection{Algorithm Design}
The fully discrete scheme (\ref{f1})-(\ref{f3}) constitutes a coupled linear system for the variables $(\phi_{h}^{n}, \mu_{h}^{n},  r_{h}^n)$. While standard iterative or direct solvers can be employed to solve this coupled linear system, this approach is often not the most efficient. To address this, we reformulate the variables $(\phi_{h}^{n}, \mu_{h}^{n})$ as
\begin{align}\label{fenl}
(\phi_{h}^{n}, \mu_{h}^{n})=(\phi_{1h}^{n}, \mu_{1h}^{n})+r_{h}^{n}(\phi_{2h}^{n}, \mu_{2h}^{n}),
\end{align}
where $(\phi_{ih}^{n}, \mu_{ih}^{n}) (i=1,2)$ can be obtained by solving the following decoupled system of equations, respectively.
\begin{equation}\label{jieou1}
  \left\{
   \begin{aligned}
&(\phi_{1h}^{n+1},u_{h})+\Delta t (\nabla \mu_{1h}^{n+1},\nabla u_{h})=(\phi_{1h}^{n},u_{h}),\\
&(\mu_{1h}^{n+1},v_{h})-(\nabla \phi_{1h}^{n+1},\nabla v_{h})=0,
   \end{aligned}
   \right.
  \end{equation}
\begin{equation}\label{jieou2}
  \left\{
   \begin{aligned}
&(\phi_{2h}^{n+1},u_{h})+\Delta t(\nabla \mu_{2h}^{n+1},\nabla u_{h})=(\phi_{2h}^{n},u_{h}),\\
&(\mu_{2h}^{n+1},v_{h})-(\nabla \phi_{2h}^{n+1},\nabla v_{h})=\left(\frac{F'(\phi_{h}^{n})}{\sqrt{E(\phi_{h}^{n})}},v_{h}\right),
   \end{aligned}
   \right.
  \end{equation}
The system of equations (\ref{jieou1})-(\ref{jieou2}) is not only linear, but also features constant coefficient matrices, enabling the efficient solution of $(\phi_{ih}^{n+1}, \mu_{ih}^{n+1}) (i=1,2)$.

By comprehensively considering equations (\ref{f3}) and (\ref{fenl}), we obtain the following linear equation for $r_{h}^{n+1}$:
\begin{equation}
  \begin{aligned}
r_{h}^{n+1}-\left(\frac{F'(\phi_{h}^{n})}{2 \sqrt{E(\phi_{h}^{n})}}, \phi_{1h}^{n+1} + r_{h}^{n+1}\phi_{2h}^{n+1} \right) = r_{h}^{n} - \left( \frac{F'(\phi_{h}^{n})}{2\sqrt{E(\phi_{h}^{n})}}, \phi_{h}^{n} \right)
\end{aligned}
  \end{equation}
From this, we can compute
\begin{equation}
  \begin{aligned}
r_{h}^{n+1}=\frac{\left(\frac{F'(\phi_{h}^{n})}{2 \sqrt{E(\phi_{h}^{n})}}, \phi_{1h}^{n+1} -\phi_{h}^{n} \right)+r_{h}^{n}}{1-\left(\frac{F'(\phi_{h}^{n})}{2 \sqrt{E(\phi_{h}^{n})}}, \phi_{2h}^{n+1}\right)}.
\end{aligned}
  \end{equation}
Substituting the computed results into equation (\ref{fenl}) then allows us to determine $(\phi_{h}^{n+1}, \mu_{h}^{n+1})$.

\section{Numerical Examples and Results}
\label{sec:numerical}
This section presents a numerical investigation to validate the key properties of the proposed SAV scheme. Through a carefully designed test case, we focus on two critical aspects: first, verifying the unconditional decay of the modified free energy, which underpins the scheme¡¯s stability; second, quantifying the experimental errors to confirm the theoretical convergence rates established in Section \ref{sec:fully discrete}. For simplicity, we define
\begin{align}
 &\|u-v\|_{\infty,1}=\max_{0\leq n\leq N}\{\|u^{n}-v^{n} \|_{1}\},\nonumber\\
 &\|u-v\|_{2,1}=\left(\sum_{n=0}^{N} \Delta t \|u^{n}-v^{n}\|_{1}^{2}\right)^{1/2},\nonumber\\
 &\|e_{r}\|_{\infty}=\max_{0\leq n\leq N}\{|r^{n}-r_{h}^{n} |\}.\nonumber
\end{align}

The computational domain is taken as $\Omega=[0,1]\times[0,1]$, which is discretized using a uniform triangular mesh. The classical double-well potential $$F(\phi)=\frac{1}{4\epsilon^2}(\phi^{2}-1)^{2}, \ \epsilon=0.1$$ is employed, with the phase-field initial condition set to $$\phi(x,y,0)=0.05\sin(2\pi x)\sin(2\pi y).$$
\begin{figure*}[!htbp]
\centering
\subfigure[Variation of the mass with time] {\includegraphics[height=2.0in,width=2.5in,angle=0]{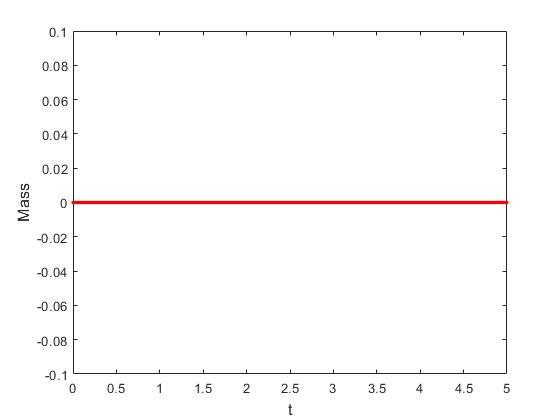}}
\subfigure[Variation of the total free energy with time] {\includegraphics[height=2.0in,width=2.5in,angle=0]{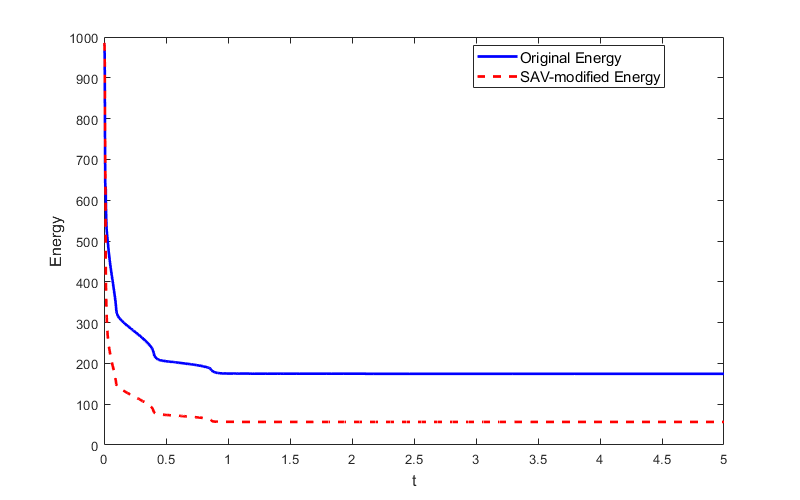}}
\caption{Temporal Evolution of Mass and Energy}\label{EM}
\end{figure*}

Figure \ref{EM} depicts the time evolution of the mass and total free energy over the interval $t=\in [0,5]$, computed with mesh size $h=1/16$ and time step $\Delta t=1/1000$. Subfigure (a) demonstrates that the mass remains virtually constant throughout the simulation, thereby verifying the mass-conservation property of the SAV scheme. Subfigure (b) presents the temporal evolution of both the original free energy and the SAV-modified free energy: both quantities decrease monotonically, exhibiting an initially rapid decay that gradually slows down. Notably, the SAV-modified free energy decays slightly faster than the original energy, consistent with the unconditional energy stability of the numerical scheme (\ref{energysh}) and (\ref{rff1}); Both energies approach steady-state values by approximately $t\approx 1.0$.
\begin{figure*}[!htbp]
\centering
\subfigure[$t=0$] {\includegraphics[height=1.35in,width=1.85in,angle=0]{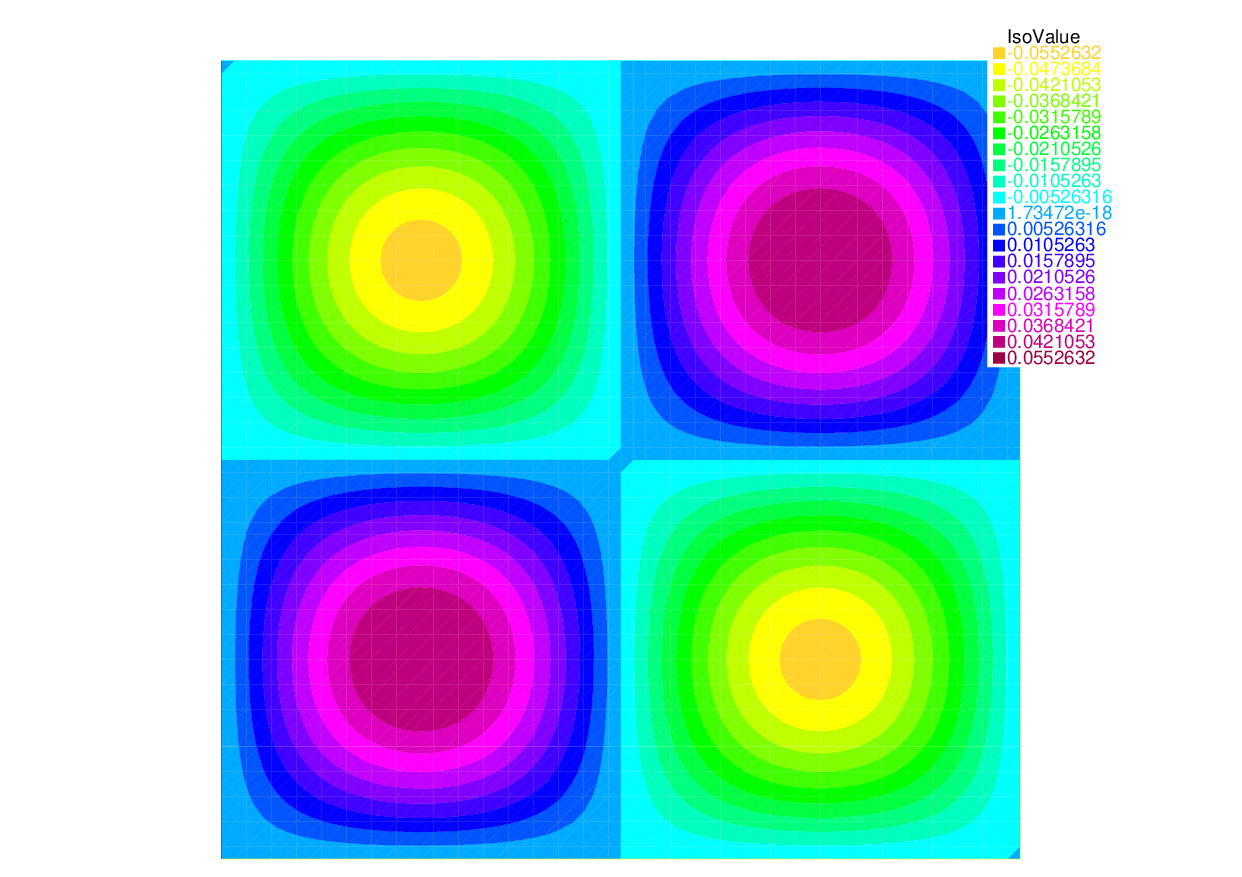}}
\subfigure[$t=0.05$] {\includegraphics[height=1.35in,width=1.85in,angle=0]{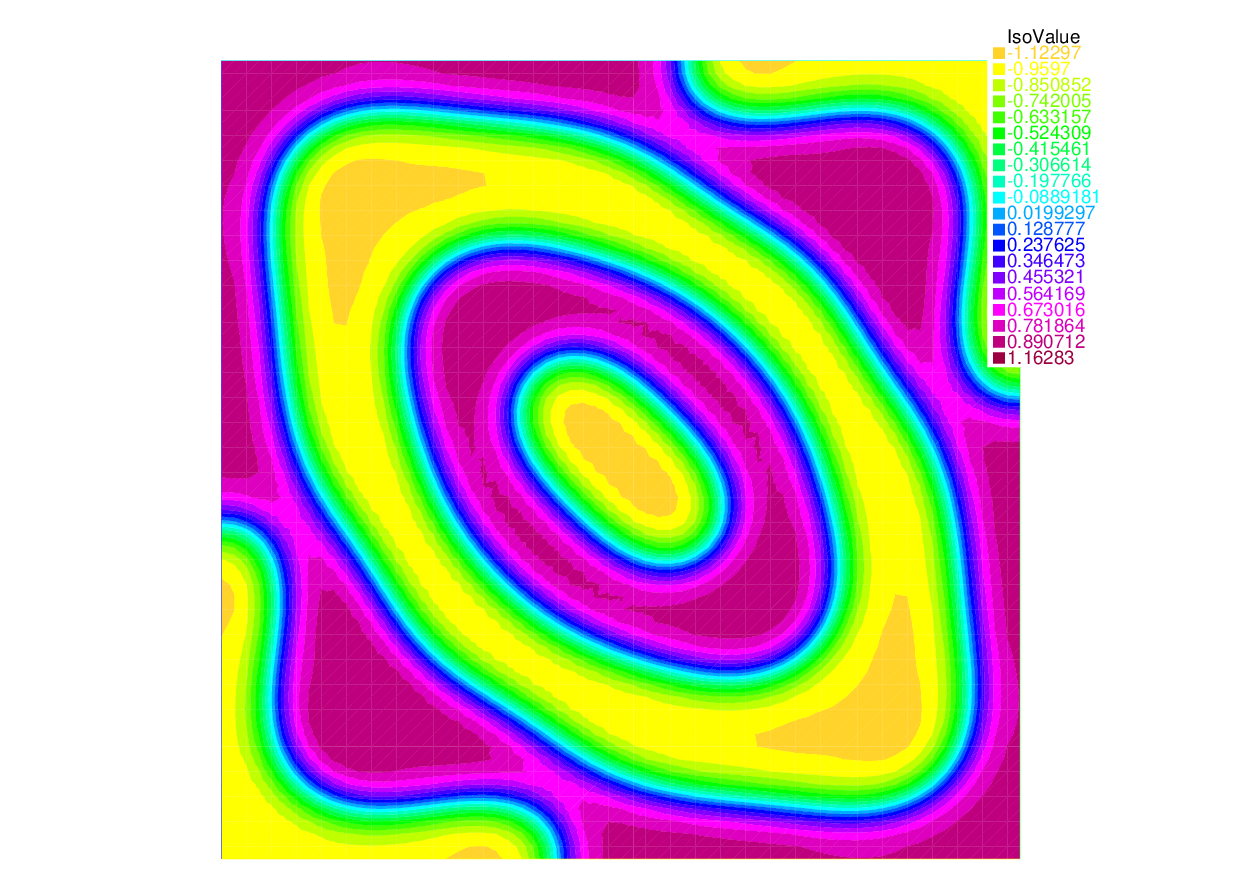}}
\subfigure[$t=0.1$] {\includegraphics[height=1.35in,width=1.85in,angle=0]{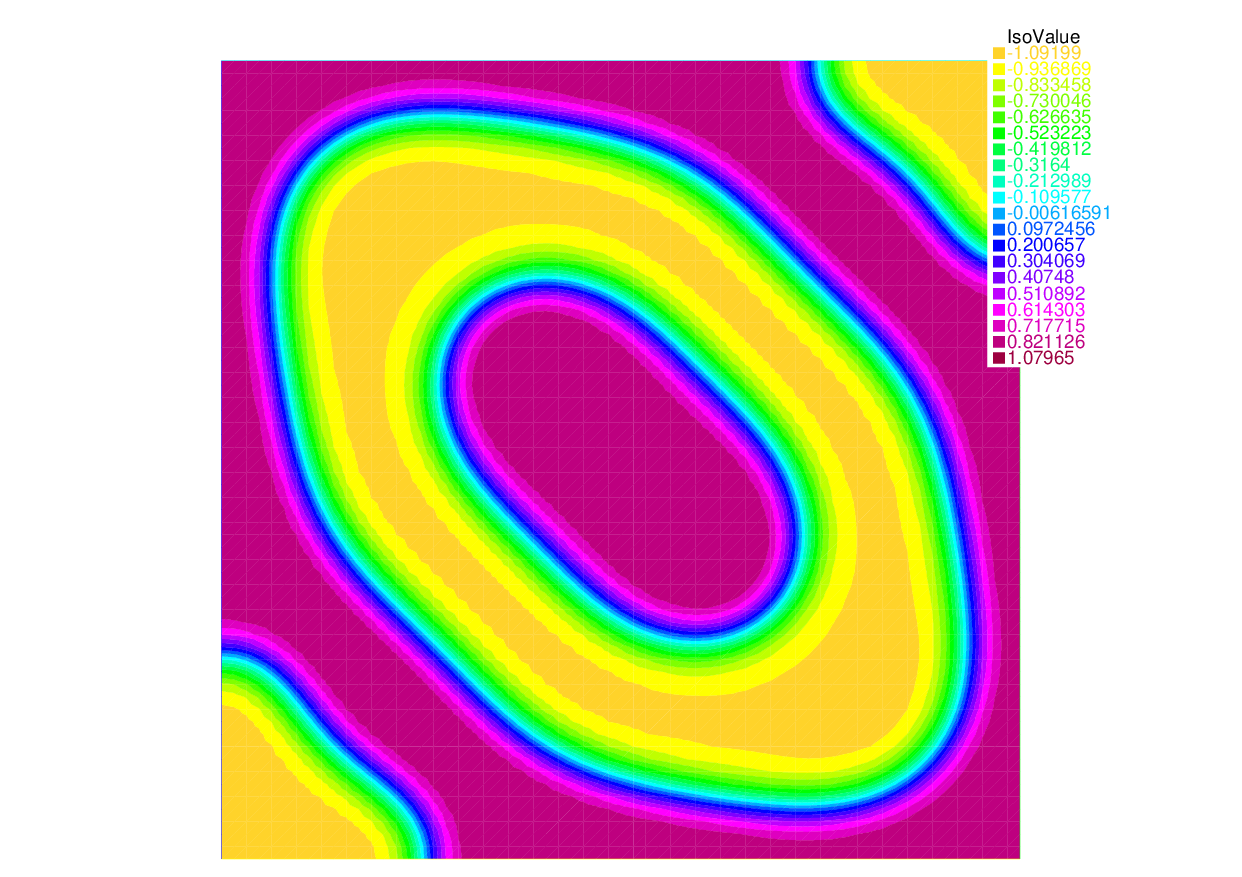}}
\subfigure[$t=0.3$] {\includegraphics[height=1.35in,width=1.85in,angle=0]{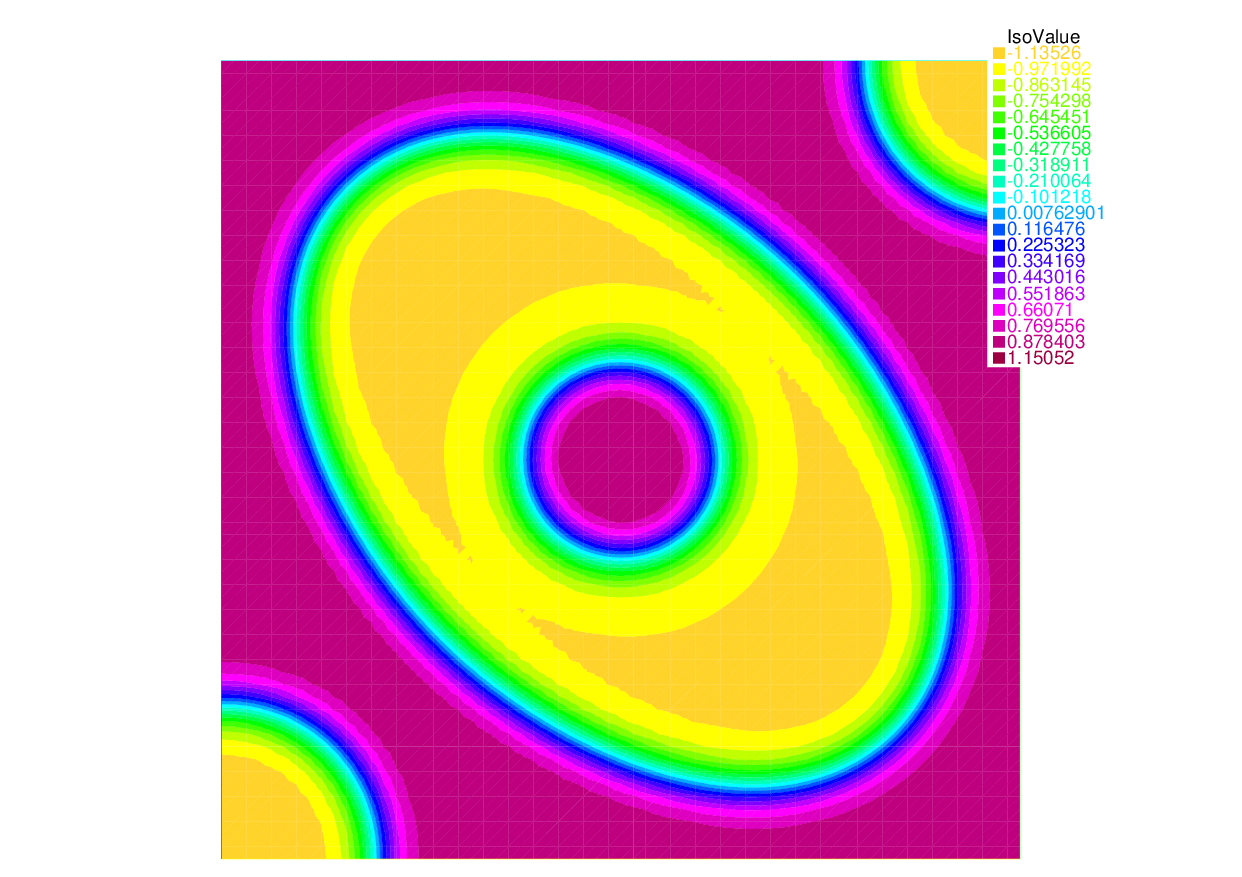}}
\subfigure[$t=0.8$] {\includegraphics[height=1.35in,width=1.85in,angle=0]{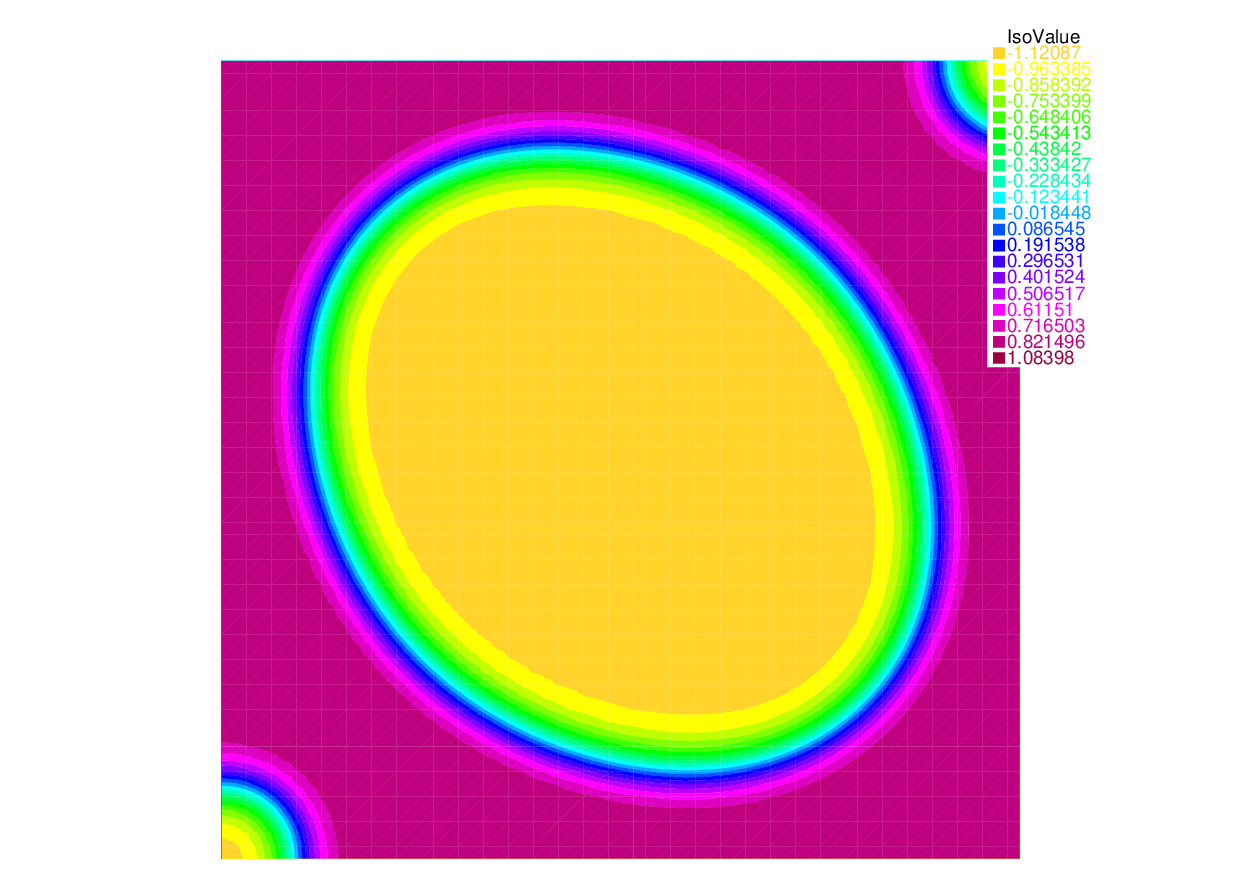}}
\subfigure[$t=2$] {\includegraphics[height=1.35in,width=1.85in,angle=0]{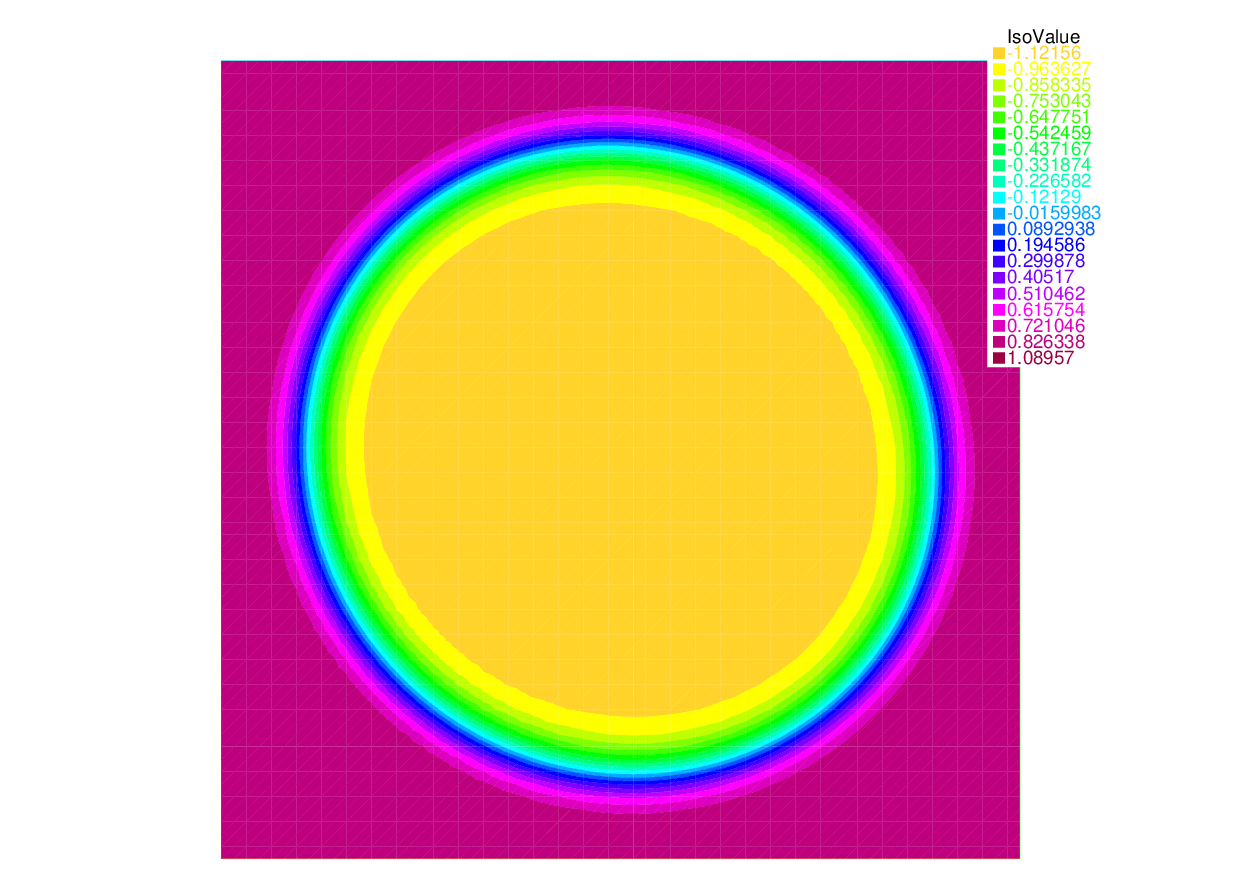}}
\caption{Evolution of the phase field variable with time}
\label{phi}
\end{figure*}

Figure \ref{phi} illustrates the evolution of the phase-field variable $\phi$ over time $t$. At the initial moment ($t=0$), the system is in a homogeneous or nearly homogeneous state, possibly with small random perturbations. The phase-field variable is uniformly distributed in space, indicating that the system is in a metastable or uniformly mixed state. In the early stage ($t=0.05,\ 0.1$), due to thermodynamic instability, the uniform state becomes unstable, and small-amplitude spatial modulations begin to emerge. The phase-field variable gradually differentiates, forming local enriched regions ($\phi>0$) and depleted regions ($\phi<0$), corresponding to the onset of phase separation. At this point, the structural scale is small, and the interfaces are relatively diffuse. During the intermediate stage ($t=0.3,\ 0.8$), phase separation becomes more pronounced. The enriched regions gradually merge and undergo coarsening, forming clear phase-domain structures. The interfaces become sharp, consistent with the tendency of interface energy minimization in the Cahn-Hilliard equation. In the late stage ($t=2$), the system gradually approaches a steady state or dynamic equilibrium. The phase domains further coarsen, the structural scale increases, and the interfaces become smoother, exhibiting a quasi-static pattern.

Since no analytical solution is available for this example, the numerical solutions $\phi_h$ and $\mu_h$ obtained with $h=1/128$ and $\Delta t=1/320000$ are taken as the reference solutions $\phi$ and $\mu$, respectively. Table \ref{biao1} lists the temporal error norms and convergence orders for $\phi_h$, $\mu_h$, and $r_h$ with respect to the reference solutions at $T=0.01$ under $h=1/128$. The results show that the errors of $\phi_h$, $\mu_h$, and $r_h$ are quite small and all achieve first-order accuracy in time. Table \ref{biao2} presents the spatial error norms and convergence orders for $\phi_h$, $\mu_h$, and $r_h$ at $T=0.01$ with $\Delta t=1/320000$. It is found that $\phi_h$ and $\mu_h$ exhibit first-order convergence, while $r_h$ shows second-order convergence. These observations agree well with the theoretical predictions stated in Theorem~\ref{thm:error_estimate}.
\begin{table}[!htb]
\centering
\caption{Time convergence rates for $\phi$, $\mu$ and $r$ with $h=\frac{1}{128}$ at $T=0.01$.}
\begin{tabular}{c| c c| c c| c c}
\hline
$\Delta t$ & $\|e_{\phi}\|_{\infty,1}$ & Rate  & $\|e_{\mu}\|_{\infty,1}$ & Rate & $\|e_{r}\|_{\infty}$ & Rate \\
\hline
$1/10000$ & 2.92694e-2 & --   & 7.77555e-2 & -- & 3.0035e-2 & --  \\
\hline
$1/20000$ & 1.53235e-2 & 0.9336 & 4.55894e-2 & 0.7702 & 1.49969e-2 & 1.0020   \\
\hline
$1/40000$ & 7.35736e-3 & 1.0167 & 2.43834e-2 & 0.9028 & 7.11474e-3 & 1.0758   \\
\hline
$1/80000$ & 3.37128e-3 & 1.1677 & 1.13765e-2 & 1.0998 & 3.07743e-3 & 1.2091  \\
\hline
\end{tabular}
\label{biao1}
\end{table}
\begin{table}[!htb]
\centering
\caption{Spatial convergence rates for $\phi$, $\mu$ and $r$ with $\Delta t = 1/320000$ at $T = 0.01$.}
\begin{tabular}{c| c c| c c| c c}
\hline
$h$ & $\|e_{\phi}\|_{\infty,1}$ & Rate & $\|e_{\mu}\|_{\infty,1}$ & Rate & $\|e_{r}\|_{\infty}$ & Rate \\
\hline
$1/8$  & 7.58020e-2 & --      & 3.40001e-1 & --     & 3.77706e-5 & --     \\
\hline
$1/16$ & 3.80908e-2 & 0.9928  & 1.65202e-1 & 1.0413 & 1.03031e-5 & 1.8742 \\
\hline
$1/32$ & 1.85565e-2 & 1.0375  & 7.81922e-2 & 1.0791 & 2.51313e-6 & 2.0355 \\
\hline
$1/64$ & 8.29197e-3 & 1.1621  & 3.45991e-2 & 1.1763 & 5.05922e-7 & 2.3125 \\
\hline
\end{tabular}
\label{biao2}
\end{table}

\end{document}